\documentclass[11pt,reqno]{amsart}

\usepackage[english]{babel}
\usepackage{csquotes}
\usepackage[style=numeric, 
            hyperref=auto,
            url=true,
            isbn=false,
            doi=false,
            block=none,
            maxnames=10,
            backend=biber]{biblatex}
\addbibresource{harders.bib}

\usepackage{amssymb}

\theoremstyle{plain}
\newtheorem{theorem}{Theorem}[section]
\newtheorem{conjecture}[theorem]{Conjecture}
\newtheorem{proposition}[theorem]{Proposition}
\theoremstyle{definition}
\newtheorem{definition}[theorem]{Definition}
\theoremstyle{remark}
\newtheorem{remark}[theorem]{Remark}
\newtheorem*{example}{Example}

\numberwithin{equation}{section}
\numberwithin{table}{section}

\newcommand{\C}{\mathbb C}
\newcommand{\Z}{\mathbb Z}
\newcommand{\F}{\mathbb F}
\newcommand{\Q}{\mathbb Q}
\newcommand{\PH}{{\mathbb H}}

\newcommand{\GL}{{\rm GL}}
\newcommand{\SL}{{\rm SL}}
\newcommand{\Sym}{{\rm Sym}}

\newcommand     {\qf}[3]        {[#1,#2,#3]}
\newcommand     {\tr}           {\sym{tr}}
\newcommand     {\sym}[1]       {\operatorname{#1}}
\newcommand     {\set}[1]       {{\def\st{\;:\;}\left\{#1\right\}}}

\begin{document}
\title[Computation of Siegel modular forms]{Computations of
  Vector-valued Siegel modular forms}
\author[Ghitza, Ryan, Sulon]{Alexandru Ghitza, Nathan C. Ryan, and
  David Sulon}

\thanks{
  Research of the first author was supported by an Early Career Researcher Grant
  from the University of Melbourne and a Discovery Grant from the Australian
Research Council.}  
\thanks{
  We thank G\"unter Harder for clarifying the case $\delta=2$
of Conjecture~\ref{conj:Harder} for us.  We thank Neil Dummigan for
many insightful comments regarding Conjecture~\ref{conj:Sym}.  We
thank Martin Raum for making available code for computing values of
symmetric square $L$-functions.}

\thispagestyle{empty}

\begin{abstract}
We carry out some computations of vector valued Siegel modular forms
of degree two, weight $(k,2)$ and level one.  Our approach is based on
Satoh's description of the module of vector-valued Siegel modular forms of
weight $(k, 2)$ and an explicit description of the Hecke action on Fourier
expansions.  We highlight three experimental results: 
(1) we identify a rational eigenform in a three dimensional space of cusp 
forms, 
(2) we observe that non-cuspidal eigenforms of level one are not always 
rational and 
(3) we verify a number of cases of conjectures about congruences between
classical modular forms and Siegel modular forms.  
\end{abstract}

\address{
{\parskip 0pt
Department of Mathematics and Statistics, University of Melbourne\endgraf
aghitza@alum.mit.edu\endgraf
\null
Department of Mathematics, Bucknell University\endgraf
nathan.ryan@bucknell.edu, david.sulon@bucknell.edu \endgraf
}
  }

\maketitle

\section{Introduction}

Computations of modular forms in general and Siegel modular forms in
particular are of great current interest.  Recent computations of Siegel
modular forms  on the paramodular group by Poor and Yuen~\cite{PoorYuen} have led to the
  careful formulation by Brumer and Kramer~\cite{BrumerKramer} of the
  Paramodular Conjecture, a natural generalization of
  Taniyama-Shimura.  Historically, computations of scalar-valued
  Siegel modular forms in the 1970s by Kurokawa~\cite{Kurokawa} led to the
  discovery of the Saito-Kurokawa lift, a construction whose
  generalizations are still studied.  In
  the 1990s, computations by Skoruppa~\cite{Skoruppa} revealed some
  striking properties that some Siegel modular forms possess (namely, there are
  rational eigenforms of weights 24 and 26 in level 1 that span a
  two-dimensional space of cusp forms).  These properties have yet to be
  explained.  This paper is in the same spirit.

We carry out the first systematic computations of spaces of 
vector-valued Siegel modular forms of degree two and of weight $(k,2)$.
We do this in Sage \cite{Sage} using a package co-authored by the
second author, Raum, Skoruppa and Tornar\'ia~\cite{smf-package}.  We observe
some new phenomena (see Propositions~\ref{prop:maeda} and
\ref{prop:eis}) and check that the eigenforms we compute satisfy the
Ramanujan-Petersson bound (see Proposition~\ref{prop:ram}).

We also verify two interesting conjectures on congruences:  
the first, due to Harder, has been previously verified
by Faber and van der Geer \cite{vanderGeer}; the other, due to Bergstr\"om,
Faber, van der Geer and Harder, has been previously verified by Dummigan
\cite{Dummigan2}.
Our approach to verifying these
conjectures is to compute Hecke eigenvalues of Siegel modular forms in as 
direct a manner as possible, using Satoh's
concrete description of Siegel modular forms of weight $(k, 2)$.  This
is a very different approach than the one taken by Faber and van der Geer and we
verify cases that they do not (and vice versa).  After
determining a basis of eigenforms for the space, we use explicit formulas
for the Hecke action on Fourier expansions to extract the Hecke eigenvalues.
Our main results in this direction are Theorem~\ref{thm:Harder}
and~\ref{thm:Sym}, which summarize the cases of the two conjectures 
that we have verified.

We do not describe the implementation of Siegel modular forms used to
carry out these computations but we refer the interested reader to
\cite{RaumRyanSkoruppaTornaria} for such a description.  The point of
this paper is that we were actually able to carry out such
computations, have made some new observations based on these computations 
and have made our data publicly available \cite{vv-smf-data}.

\section{Vector-valued Siegel modular forms of weight $(k, 2)$}
We recall the definition of a Siegel modular form of degree two.  We 
consider the \emph{full Siegel modular group} $\Gamma^{(2)}$
given by
\begin{equation*}
  \Gamma^{(2)}:=\textrm{Sp}(4,\Z)=\left\{
    M\in\textrm{M}(4,\Z)\colon {}^t M 
    \left(\begin{smallmatrix} & I_2 \\
    -I_2 & \end{smallmatrix}\right)
    M=
    \left(\begin{smallmatrix} & I_2 \\
      -I_2 & \end{smallmatrix}\right)\right\}.
\end{equation*}
Let
\begin{equation*}
  \PH^{(2)}
  :=
  \left\{Z\in \textrm{M}(2,\C)\;:\;{}^tZ=Z,\mathrm{Im}(Z)>0\right\}
\end{equation*}
be the Siegel upper half space of degree $2$.  For a nonnegative
integer $j$, the space $\C[X,Y]_j$ of homogeneous polynomials of 
degree $j$ has a $\GL_2$-action given by
\begin{equation}\label{eq:action}
  (A,p)\mapsto A\cdot p := p\bigl((X,Y)A\bigr).
\end{equation}

\begin{definition}
  \label{def:smfs}
  Let $k, j$ be nonnegative integers.
  A \emph{Siegel modular form} of degree 2 and weight $(k,j)$ is a
  complex analytic function $F\colon\PH^{(2)}\to \C[X,Y]_j$ such that
  \begin{equation*}
    F(gZ)
    :=
    F\bigl((AZ+B)(CZ+D)^{-1}\bigr) = \det(CZ+D)^k\,(CZ+D)\cdot F(Z)
  \end{equation*}
  for all
  $g=\left(\begin{smallmatrix}A&B\\C&D\end{smallmatrix}\right)\in\Gamma^{(2)}$.
\end{definition}
\begin{remark}\label{rmk:concrete}
The action in \eqref{eq:action} is a concrete realization of the symmetric power representation
$\mathrm{Sym}^j$ of $\GL_2$.  Definition~\ref{def:smfs} is a concrete description of Siegel
modular forms with values in the representation space
$\det^k\otimes\,\mathrm{Sym}^j$ of $\GL_2$.  We made these choices in
our implementation because it made the multiplication of vector-valued
Siegel modular forms easier to implement.
\end{remark}

The space of all such functions is denoted $M^{(2)}_{k,j}$,
where we suppress $j$ if it
is $0$. If $j$ is positive $F$ is called \emph{vector-valued},
otherwise it is called \emph{scalar-valued}.
We write $M_*^{(2)}:=\bigoplus_{k} M_k^{(2)}$ for the ring of (scalar-valued) Siegel modular
forms of degree $2$.

Let 
\[
Q := \set{f = [a,b,c] \st a,b,c\in\Z,\; b^2-4ac\leq 0,\; a\geq 0}
\] 
where $[a,b,c]$ corresponds to the quadratic form $aX^2+bXY+cY^2$.

A Siegel modular form $F$ has a Fourier expansion of the form
\[
  F(Z)
  = \sum_{f=\qf abc\in Q} C_F(f)\,e\left(a\tau+bz+c\tau^\prime\right).
\]
Here $Z:=\left(\begin{smallmatrix} \tau &
    z\\z&\tau^\prime\end{smallmatrix}\right)$ ($\tau,\tau^\prime\in\PH^{(1)}$ and
$z\in\C$), $e(x)=e^{2\pi i x}$,
the trace of a matrix $A$ is denoted by $\tr A$.  The form $F$ is called 
a \emph{cusp form} if its Fourier expansion is supported on
positive-definite elements of $Q$.  The subspace of cusp forms is denoted
$S^{(2)}_{k,j}$.

The ring of all vector-valued Siegel modular forms $\bigoplus_{k,j}
M_{k,j}^{(2)}$ is not finitely generated.  For this reason the
symmetric power $j$ is usually fixed.  The resulting module is
finitely generated over $M_*^{(2)}$.  We focus exclusively
on weight $(k, 2)$, where we have a very concrete
description of these spaces thanks to work of Satoh.

\subsection{Satoh's Theorem}\label{sec:satoh}

The Satoh bracket is a special case of
the general Rankin-Cohen bracket construction.  Satoh \cite{Satoh}
examined the case of weight $(k, 2)$.  Suppose $F\in M_{k}^{(2)}$ and
$G\in M_{k^\prime}^{(2)}$ are two scalar-valued Siegel modular forms.
We define the \emph{Satoh bracket} by
\begin{align*}
  [F, G]_{2} & = \frac{1}{2 \pi i} \left(\frac{1}{k} G\, \partial_Z F
  - \frac{1}{k^\prime} F\, \partial_Z G\right) \in M_{k+k^\prime,2}^{(2)}\text{,}
\end{align*}
where $\partial_Z = \left(\begin{smallmatrix} \partial_{Z_{11}} &
    1/2\, \partial_{Z_{12}} \\ 1/2\, \partial_{Z_{12}}
    & \partial_{Z_{22}}\end{smallmatrix}\right)$.

In the same paper, Satoh showed that $\bigoplus_k M_{k,2}^{(2)}$ is
generated by elements all of which can be expressed in terms of Satoh 
brackets.  More precisely, he showed that
\begin{align*} 
  M_{k,2}^{(2)}
=&
  [E_4, E_6]_2 \cdot M_{k-10}^{(2)} \oplus
  [E_4, \chi_{10}]_2 \cdot M_{k-14}^{(2)} \oplus
\\&
  [E_4, \chi_{12}]_2 \cdot M_{k-16}^{(2)} \oplus
  [E_6, \chi_{10}]_2 \cdot \C[E_6, \chi_{10}, \chi_{12}]_{k-16} \oplus
\\&
  [E_6, \chi_{12}]_2 \cdot \C[E_6, \chi_{10}, \chi_{12}]_{k-18} \oplus
  [\chi_{10}, \chi_{12}]_2 \cdot \C[\chi_{10}, \chi_{12}]_{k-22}
\text{.}
\end{align*}
Here the forms $E_4,E_6,\chi_{10},\chi_{12}$ are the generators of the
ring of scalar-valued Siegel modular forms described by Igusa
\cite{Igusa}.  By $\C[A_1,\dots,A_n]_k$ we mean the module of weight $k$ modular
  forms that can be expressed in terms of generators $A_1,\dots,A_n$.

  A basis for the space $M_{k,2}^{(2)}$ was computed via a Sage \cite{Sage}
implementation in \cite{smf-package} of an algorithm found in
\cite{RaumRyanSkoruppaTornaria}.  

\subsection{Hecke Operators}
As our interest is in computing Hecke eigenforms, we need to describe
how one computes the Hecke action.  We give formulas for the image of a Siegel
modular form of weight $(k, 2)$ under the operator
$T(p^\delta)$.  The Hecke operators are multiplicative and so it
suffices to understand the image for these operators.  The formulas
can be found in \cite{Ibukiyama} but we present them here for completeness.

Let $F$ be a Siegel modular form as above and let the image of $F$
under $T(p^\delta)$ have coefficients $C^\prime([a,b,c])$.  Then 
\begin{multline}\label{eq:hecke}
C^\prime([a,b,c])=
\sum_{\alpha+\beta+\gamma=\delta}p^{\beta k+\gamma(2k-1)}\times\\
\sum_{\substack{U\in R(p^\beta)\\a_U \equiv
    0\,(p^{\beta+\gamma})\\ b_U \equiv c_U\equiv 0\,(p^{\gamma})}} (d_{0,\beta}U)\cdot C\left(p^{\alpha}\left[\frac{a_U}{p^{\beta+\gamma}},\frac{b_U}{p^\gamma},\frac{c_U}{p^{\gamma-\beta}}\right]\right)
\end{multline}
where 
\begin{itemize}
  \item
$R(p^\beta)$ is a complete set of representatives for
$\SL(2,\Z)/\Gamma^{(1)}_0(p^\beta)$ where $\Gamma^{(1)}_0(p^\beta)$ is
the congruence subgroup of $\SL(2,Z)$ of level $p^\beta$; 
\item for $f=[a,b,c]$,
$[a_U,b_U,c_U]=f_U:=f\left((X,Y){}^tU\right)$; 
\item $d_{0,\beta} =
\left(\begin{smallmatrix}1 & \\ & p^\beta\end{smallmatrix}\right)$;
\item the $\cdot$ is given by the action defined in \eqref{eq:action}.
\end{itemize}
We denote the Hecke eigenvalue of a Siegel modular form $F$ under the
operator $T(p^\delta)$ by $\lambda_{p^\delta}(F)$.
If the space $S_{k,2}^{(2)}$ has dimension $d$, the
Hecke eigenvalues of $F$ are algebraic numbers of degree at most $d$.  The
field that contains the Hecke eigenvalues of $F$ is denoted $\Q_F$.

\subsection{Computing Hecke eigenforms}\label{sec:eigenforms}

Fix a space of Siegel modular forms of weight $(k, 2)$ with basis
$\{F_1,\dots,F_n\}$, obtained as algebraic combinations of
the Igusa generators and Satoh brackets.  Because the Hecke operators are a commuting
family of linear operators, there is a basis $\{G_1,\dots,G_n\}$ for
the space consisting entirely of simultaneous eigenforms.

The forms $G_i$ are determined computationally as follows.  First,
determine the matrix representation for the Hecke operator $T(2)$ by
computing the image under $T(2)$ of each basis element $F_i$.  Build a
matrix $N$ that is invertible and whose $j$th row consists of coefficients of
$F_j$ at certain indices $Q_1,\dots,Q_n$.  To ensure that $N$ is
invertible we pick the indices one at a time, making sure that each
choice of index $Q_i$ increases the rank of $N$.  We then
construct a matrix $M$ whose $j$th row consists of coefficients of the
image of $F_j$ under $T(2)$ indexed by $Q_1,\dots,Q_n$.  Then the
matrix representation of $T(2)$ is $M N^{-1}$.  

We compute the Hecke eigenforms using $T(2)$ and we express them in terms
of the basis $\{F_1,\dots,F_n\}$.  We compute the Hecke eigenvalues
$\lambda_{p^\delta}$ by computing these expressions to high
precision and then computing their image under the
Hecke operator $T(p^\delta)$ as in \eqref{eq:hecke}.

\subsection{Hecke eigenvalues, Satake parameters and symmetric polynomials}
Fix a prime $p$.
The Satake isomorphism $\Omega$ is a map between the local-at-$p$ Hecke
algebra $\mathcal{H}_p$ associated to $\Gamma$ and a polynomial ring
$\Q[x_0,x_1,x_2]^{W_2}$ invariant
under the action of the Weyl group.  A matrix representation of $\Omega$
can be found in \cite{Krieg,RyanShemanske}.  We summarize the relevant
results here.  

Consider 
\begin{align*}
  T(p)&=\Gamma\begin{pmatrix}1\\ &1\\ &&p\\ &&&p\end{pmatrix}\Gamma, &
  T_0(p^2)&=\Gamma\begin{pmatrix}1\\ &1\\ &&p^2\\ &&&p^2\end{pmatrix}\Gamma\\
  T_1(p^2)&=\Gamma\begin{pmatrix}1\\ &p\\ &&p^2\\
    &&&p\end{pmatrix}\Gamma, &
  T_2(p^2)&=\Gamma\begin{pmatrix}p\\ &p\\ &&p\\ &&&p\end{pmatrix}\Gamma.
\end{align*}
The images under $\Omega$ of these
operators are:
\begin{align*}
\Omega(T(p)) &= x_0x_1x_2+x_0x_1+x_0x_2+x_0\\
\Omega(T_0(p^2)) &= \tfrac{2p-2}{p}
\phi_2+\tfrac{p-1}{p}\phi_1+\phi_0\\
\Omega(T_1(p^2)) &= \tfrac{p^2-1}{p^3}\phi_2 + \tfrac{1}{p}\phi_1\\
\Omega(T_2(p^2) &= \tfrac{1}{p^3} \phi_2
\end{align*}
where 
\begin{align*}
\phi_0 &= x_0^2x_1^2x_2^2+x_0^2x_1^2+x_0^2x_2^2+x_0^2\\
\phi_1 &= x_0^2x_1^2x_2+x_0^2x_1x_2^2+x_0^2x_1+x_0^2x_2\\
\phi_2 &= x_0^2x_1x_2.
\end{align*}
Fix a Siegel Hecke eigenform $F$.  It can be shown
\cite[p. 165]{Andrianov} that for any $p$ there exists a triple
$(\alpha^F_{0,p},\alpha^F_{1,p},\alpha^F_{2,p})\in (\C^\times)^3/W_2$
with the property that $\Omega(T)|_{x_i\leftarrow \alpha^F_{i,p}}=\lambda_T(F)$, the
eigenvalue of $F$ with respect to the Hecke operator $T$.  The numbers
$\alpha$ are called the \emph{Satake parameters} of $F$ at $p$.

We will make use of the following way of expressing $T(p)^2$ in terms of the
operators $T_i(p^2)$:
\begin{theorem}[\cite{vanderGeer}]\label{thm:hecke_relation}
  $T(p)^2=T_0(p^2)+(p+1)T_1(p^2)+(p^2+1)(p+1)T_2(p^2)$.
\end{theorem}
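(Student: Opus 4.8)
The plan is to prove the Hecke relation
$$T(p)^2 = T_0(p^2) + (p+1)T_1(p^2) + (p^2+1)(p+1)T_2(p^2)$$
by working through the Satake isomorphism $\Omega$, which is an algebra isomorphism onto the Weyl-invariant polynomial ring. Since $\Omega$ is injective (indeed an isomorphism), it suffices to verify the corresponding polynomial identity after applying $\Omega$ to both sides. So I would prove
$$\Omega(T(p))^2 = \Omega(T_0(p^2)) + (p+1)\,\Omega(T_1(p^2)) + (p^2+1)(p+1)\,\Omega(T_2(p^2)),$$
using that $\Omega$ is a ring homomorphism so that $\Omega(T(p)^2) = \Omega(T(p))^2$. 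All the images on the right are given explicitly in the excerpt, so the entire problem reduces to a finite polynomial computation in $\Q[x_0,x_1,x_2]$ (which automatically lands in the Weyl-invariant subring).

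First I would compute the left-hand side. From the excerpt,
$$\Omega(T(p)) = x_0x_1x_2 + x_0x_1 + x_0x_2 + x_0 = x_0(x_1x_2 + x_1 + x_2 + 1) = x_0(1+x_1)(1+x_2),$$
so that squaring gives $\Omega(T(p))^2 = x_0^2(1+x_1)^2(1+x_2)^2$. The key step is then to expand the right-hand side. I would substitute the given formulas for $\Omega(T_i(p^2))$ in terms of $\phi_0,\phi_1,\phi_2$, collect the coefficient of each $\phi_i$, and confirm that the $p$-dependent coefficients combine correctly. Concretely, gathering terms,
$$\Omega(\text{RHS}) = \phi_0 + \left(\tfrac{p-1}{p} + (p+1)\tfrac{1}{p}\right)\phi_1 + \left(\tfrac{2p-2}{p} + (p+1)\tfrac{p^2-1}{p^3} + (p^2+1)(p+1)\tfrac{1}{p^3}\right)\phi_2.$$
The coefficient of $\phi_1$ simplifies to $\tfrac{(p-1)+(p+1)}{p} = 2$, and I expect the coefficient of $\phi_2$ to simplify (after clearing the $p^3$ denominator and expanding) to $4$; the numerator is $2p^2(p-1) + (p+1)(p^2-1) + (p^2+1)(p+1)$, which I anticipate equals $4p^3$. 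Thus $\Omega(\text{RHS}) = \phi_0 + 2\phi_1 + 4\phi_2$.

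Finally I would verify that $\phi_0 + 2\phi_1 + 4\phi_2$ agrees with the expanded left-hand side $x_0^2(1+x_1)^2(1+x_2)^2$. Expanding the latter gives $x_0^2(1 + 2x_1 + x_1^2)(1 + 2x_2 + x_2^2)$, and multiplying out produces exactly the monomials grouped into $\phi_0$ (the terms $x_0^2x_1^2x_2^2 + x_0^2x_1^2 + x_0^2x_2^2 + x_0^2$), $2\phi_1$ (the terms with coefficient $2$, namely $2x_0^2x_1^2x_2 + 2x_0^2x_1x_2^2 + 2x_0^2x_1 + 2x_0^2x_2$), and $4\phi_2$ (the single term $4x_0^2x_1x_2$). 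This matching is purely mechanical once the $\phi_i$ groupings are recognized. I do not expect any genuine obstacle: the only place where a sign or factor error could creep in is the arithmetic simplification of the coefficient of $\phi_2$, where the three rational terms with denominator $p^3$ must be combined carefully; that numerator computation is the one step worth double-checking against $4p^3$.
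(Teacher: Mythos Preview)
Your argument is correct: the Satake isomorphism is an algebra isomorphism, so it suffices to check the identity after applying $\Omega$, and your polynomial computation goes through (the $\phi_2$ numerator indeed sums to $4p^3$, and $\phi_0+2\phi_1+4\phi_2 = x_0^2(1+x_1)^2(1+x_2)^2$ exactly).

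Note, however, that the paper does not supply its own proof of this theorem: it is quoted from~\cite{vanderGeer} and used as a black box (to relate $\lambda_p^2$ with the $\lambda_i(p^2)$). So there is no proof in the paper to compare against. Your verification via the explicit Satake images listed in the paper is a clean, self-contained argument that the paper could have included; it is essentially the standard way one checks such relations once the matrix of $\Omega$ (as in~\cite{Krieg,RyanShemanske}) is available.
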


\section{Computational and Experimental Results}\label{sec:computations}

We carry out the computations of particular eigenforms in the
following way.  Satoh's theorem as described above in
Section~\ref{sec:satoh} gives a recipe for computing a basis of
vector-valued Siegel modular forms of weight $(k,2)$.  In particular,
following \cite{Skoruppa} we can compute the Fourier expansions of the Igusa
generators $E_4,E_6,\chi_{10},\chi_{12}$.  This is done via an
explicit map from elliptic modular forms to Siegel modular forms.  For
each generator we easily computed the part of its Fourier expansion
which is supported on positive definite quadratic forms up to discriminant
3000 and on singular quadratic forms $[0,0,c]$ where $0\leq c\leq 750$.  

Using these four Igusa generators we determine a basis for the space
of weight $(k,2)$ as prescribed by Satoh's theorem:  we compute a
basis for the modules $M_{k-10}$, $M_{k-14}$, $M_{k-16}$ and 
$\C[E_6,\chi_{10}, \chi_{12}]_{k-16}$, $\C[E_6, \chi_{10},
  \chi_{12}]_{k-18}$,
$\C[\chi_{10}, \chi_{12}]_{k-22}$.  We then form a basis for the space
of weight $(k,2)$ by multiplying each basis above by the
Satoh bracket that corresponds to it in Satoh's theorem and end up with 
eigenforms following the procedure
described in Section~\ref{sec:eigenforms}.  One might pause at the
idea of multiplying vector-valued Siegel modular forms but this is
precisely the reason why we defined the coefficients of vector-valued
Siegel modular forms to be homogeneous polynomials (see Remark~\ref{rmk:concrete}). 

For example, we find that a basis for the space of weight $(16,2)$ is
given by
\begin{align*}
G_1 &= E_6[E_4,E_6]_2-\tfrac{173820100608}{1557539}[E_4,\chi_{12}]_2 + \tfrac{1800409600}{1557539}[E_6,\chi_{10}]_2\\
G_2 &= [E_4,\chi_{12}]_2+ (\tfrac{5}{8064}\alpha +
\tfrac{755}{42})[E_6,\chi_{10}]_2\\
\overline{G}_2 &= [E_4,\chi_{12}]_2+ (\tfrac{5}{8064}\bar\alpha + \tfrac{755}{42})[E_6,\chi_{10}]_2\\
\end{align*}
where $\alpha$ is a root of $x^2 + 58752x + 858931200$ and
$\bar\alpha$ is its conjugate.  The form $G_1$
is non-cuspidal (probably Eisenstein) and the other two forms in 
the basis are cuspidal.

The Hecke eigenvalues that appear in the space $S_{k,2}^{(2)}$ tend to have
  the largest possible degree, namely the dimension $d$ of the space.
  For example the forms $G_2$ and $\overline{G}_2$ above have Hecke
  eigenvalues in a quadratic field.  There
  are however (very surprisingly!) counterexamples to this; this is 
  analogous to what
  happens in the scalar-valued spaces $S^{(2)}_{24}$ and $S^{(2)}_{26}$, 
  see \cite{Skoruppa}, but in stark contrast to the situation in degree
  one (as predicted by Maeda's conjecture).

  Consider the weight $(20, 2)$.
  Let $K=\Q[\alpha]$ be the quadratic number field with
  minimal polynomial $x^2 - 780288x + 121332695040$, and let $\bar\alpha$
  denote the conjugate of $\alpha$ in $K$.  The space $S_{20,2}^{(2)}$ is
  three-dimensional, and a basis of Hecke eigenforms is given by:
\begin{align*}
H_1&=\chi_{10}[E_4,E_6]_2-\tfrac{5}{14}E_6[E_4,\chi_{10}]\\
H_2&=\chi_{10}[E_4,E_6]_2+(\tfrac{25}{12241152}\alpha -
\tfrac{7685}{15939})E_6[E_4,\chi_{10}]_2+\\
&\phantom{XXXXXXXXXXXXXXXXXXXXX}(\tfrac{-1}{364320}\alpha +
\tfrac{674}{759})E_4[E_4,\chi_{12}]_2\\
H_3&=\chi_{10}[E_4,E_6]_2+(\tfrac{25}{12241152}\bar{\alpha} -
\tfrac{7685}{15939})E_6[E_4,\chi_{10}]_2+\\
&\phantom{XXXXXXXXXXXXXXXXXXXXX} (\tfrac{-1}{364320}\bar{\alpha} +
\tfrac{674}{759})E_4[E_4,\chi_{12}]_2.
\end{align*}

Note that the first has rational eigenvalues, and the second and third have
conjugate quadratic eigenvalues.  We checked that each of these forms
satisfies the Ramanujan-Petersson conjecture.

\begin{proposition}\label{prop:ram}
  For all $k$ satisfying $14\leq k\leq 30$, the Hecke eigenforms in
  $S^{(2)}_{k,2}$ satisfy the Ramanujan-Petersson conjecture at $p=2,3,5$.  
  More precisely, let $F\in S^{(2)}_{k,2}$ be a Hecke eigenform with
  eigenvalues $\lambda_p$, $\lambda_{p^2}$ and consider the polynomial
  \begin{equation*}
    X^4 - \lambda_p X^3
    + \left(\lambda_p^2-\lambda_{p^2}-p^{2k-2}\right)X^2
    - p^{2k-1}\lambda_p X
    + p^{4k-2}.
  \end{equation*}
  Then all roots $z\in\C$ of this polynomial satisfy
  $|z|=p^{(2k+j-3)/2}$.
\end{proposition}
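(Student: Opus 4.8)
The plan is to recognize the given quartic as the local spin $L$-factor (equivalently, the Hecke polynomial) of the eigenform $F$ at $p$, whose roots are exactly the Satake parameters of the spin representation. The Ramanujan–Petersson conjecture for $\mathrm{GSp}_4$ predicts that these Satake parameters all have the same absolute value, determined by the weight, and the statement to prove is precisely this numerical bound for the concrete weight-$(k,2)$ forms we computed. So rather than proving the conjecture itself, the task reduces to verifying that for each $F$ in our finite list of eigenforms (the weights $14 \le k \le 30$, at primes $p=2,3,5$), the four roots of the displayed polynomial numerically satisfy $|z| = p^{(2k+j-3)/2}$ with $j=2$, i.e.\ $|z| = p^{(2k-1)/2}$.

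First I would make explicit the relationship between the displayed quartic and the Satake parameters introduced in Section~2. Writing the spin $L$-factor in terms of the Satake parameters $(\alpha_{0,p}^F, \alpha_{1,p}^F, \alpha_{2,p}^F)$, the four roots should be $\alpha_0, \alpha_0\alpha_1, \alpha_0\alpha_2, \alpha_0\alpha_1\alpha_2$ (up to the standard normalization). The coefficients of the quartic are symmetric functions of these, and comparing them with the formula given shows that $\lambda_p$ is the eigenvalue of $T(p)$ (matching $\Omega(T(p)) = x_0x_1x_2 + x_0x_1 + x_0x_2 + x_0$ evaluated at the $\alpha$'s), while the coefficient $\lambda_p^2 - \lambda_{p^2} - p^{2k-2}$ encodes the relevant combination of $T_i(p^2)$-eigenvalues via the relation in Theorem~\ref{thm:hecke_relation}. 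The product of the roots being $p^{4k-2}$ reflects the functional equation / the value of $\alpha_0^4\alpha_1^2\alpha_2^2$ forced by the weight. I would carry out this identification once, symbolically, so that the absolute-value condition $|z| = p^{(2k-1)/2}$ becomes the single clean assertion that each Satake parameter appearing has modulus $p^{(2k-1)/2}$.

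Next I would do the actual computation: for each weight $k$ in the stated range, we already have the Hecke eigenforms (as in the worked examples $H_1, H_2, H_3$ for $k=20$), so we extract $\lambda_p$ and $\lambda_{p^2}$ for $p=2,3,5$ using the Fourier-expansion formula \eqref{eq:hecke}, form the quartic, and compute its roots to high numerical precision. The verification then amounts to checking $|z| = p^{(2k-1)/2}$ for all four roots in every case. Because the eigenvalues may be quadratic irrationalities (as for $H_2, H_3$), I would either work in the number field $\Q_F$ and check the modulus condition for each archimedean embedding, or carry enough numerical precision to certify the equality; the cleanest route is to verify that the quartic factors (over $\Q_F$ or $\R$) into conjugate pairs of the form $(X^2 - 2p^{(2k-1)/2}\cos\theta\, X + p^{2k-1})$, which is equivalent to all roots lying on the circle of radius $p^{(2k-1)/2}$.

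The main obstacle is that this is a genuinely computational verification, not a theoretical argument: there is no proof valid for all $k$, only case-by-case confirmation over a finite range of weights and three small primes. Thus the real work is numerical and lies in (i) computing the eigenvalues $\lambda_p, \lambda_{p^2}$ reliably to sufficient precision via \eqref{eq:hecke}, which requires enough Fourier coefficients supported on the quadratic forms appearing in the Hecke sum, and (ii) certifying the modulus equalities despite the presence of algebraic (non-rational) eigenvalues. I expect the coefficient-precision requirement at $p=5$ and the larger weights to be the delicate point, since the Hecke action at $p^2$ probes Fourier coefficients at indices scaled by $p^2$, demanding a correspondingly deep Fourier expansion of each eigenform.
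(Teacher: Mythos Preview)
Your proposal is correct and matches the paper's approach: Proposition~\ref{prop:ram} is a purely computational verification, and the paper offers no argument beyond pointing to the data (``The data that make up the proofs of these propositions can be found at \cite{vv-smf-data}''). Your additional identification of the quartic with the local spin $L$-factor via the Satake parameters is more explicit than anything the paper spells out, but the underlying method---compute $\lambda_p,\lambda_{p^2}$ from Fourier expansions, form the quartic, and numerically check that the roots lie on the circle of radius $p^{(2k-1)/2}$---is exactly what was done.
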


We also looked at the level 1
elliptic modular forms with Hecke eigenvalues in quadratic
fields and these fields are different than $K$.  This indicates that
$H_2$ and $H_3$ are unlikely to be lifts.
Therefore the naive generalization
of Maeda's conjecture does not hold for $S_{k,2}^{(2)}$.  We remark that in
all other weights for which we have carried out computations, the naive
generalization does indeed hold:
\begin{proposition}\label{prop:maeda}  
Let $k\in\{14,16,18, 22,24,26,28,30\}$.  Then the
characteristic polynomial of the Hecke operator $T(2)$ acting on
$S_{k,2}$ is irreducible
over $\Q$.  If $k=20$, the characteristic polynomial of the Hecke
operator $T(2)$ decomposes over $\Q$ into a linear factor and a
quadratic factor.
\end{proposition}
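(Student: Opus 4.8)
The assertion is computational: for each listed weight I would compute the matrix of $T(2)$ acting on $S_{k,2}^{(2)}$ and factor its characteristic polynomial over $\Q$. The key observation is that every quantity involved is rational, so the computation can be carried out exactly and the conclusions --- irreducibility for $k\neq 20$, and a split into a linear and a quadratic factor for $k=20$ --- are rigorous rather than numerical.

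First, for each $k$ I would assemble a basis $\{F_1,\dots,F_n\}$ of the cuspidal space $S_{k,2}^{(2)}$ following Satoh's theorem of Section~\ref{sec:satoh}: compute the Fourier expansions of the Igusa generators $E_4,E_6,\chi_{10},\chi_{12}$ via the explicit lift from elliptic modular forms, form the relevant Satoh brackets using the bracket formula, and multiply by the appropriate scalar-valued generators to obtain a basis of $M_{k,2}^{(2)}$. The cuspidal subspace is then cut out by the vanishing of the Fourier coefficients at non-positive-definite quadratic forms. Each resulting $F_i$ has rational Fourier coefficients, known out to the discriminant bound available for the generators.

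Next I would apply $T(2)$ to each $F_j$ using the explicit formula \eqref{eq:hecke} with $p=2$ and $\delta=1$. Following Section~\ref{sec:eigenforms}, I would pick indices $Q_1,\dots,Q_n$ one at a time so that the matrix $N=\bigl(C_{F_j}(Q_\ell)\bigr)$ is invertible, build $M=\bigl(C_{T(2)F_j}(Q_\ell)\bigr)$, and take the Hecke matrix to be $MN^{-1}$. Since $T(2)$ preserves the $n$-dimensional space $S_{k,2}^{(2)}$ and a form there is determined by finitely many coefficients once $N$ is invertible, $MN^{-1}$ is the exact matrix of $T(2)$ in the basis $\{F_i\}$, and its characteristic polynomial lies in $\Q[X]$. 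Factoring over $\Q$ --- a finite, exact step --- then yields irreducibility for the eight values of $k$ and the linear-times-quadratic splitting for $k=20$, the latter matching the three-dimensional eigenbasis $H_1,H_2,H_3$ exhibited above (one rational eigenvalue together with a conjugate quadratic pair).

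The main difficulty is not conceptual but one of precision and range. Formula \eqref{eq:hecke} at $p=2$ expresses $C_{T(2)F_j}$ in terms of values $C_F$ at dilated and divided quadratic forms, so to fill the matrix $M$ I must know the generators to a considerably larger discriminant than the entries of $N$ alone require; I must also verify that a full-rank choice of indices $Q_1,\dots,Q_n$ exists within the computed range. For the larger weights $k=28,30$ the dimension $n$ and hence the required coefficient range grow, so the practical obstacle is ensuring the truncation is large enough. Once it is, the exactness of the rational arithmetic makes the factorization --- and therefore the proposition --- conclusive.
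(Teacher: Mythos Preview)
Your proposal is correct and follows exactly the computational strategy the paper uses: build a rational basis of $S_{k,2}^{(2)}$ via Satoh's theorem, compute the matrix of $T(2)$ as in Section~\ref{sec:eigenforms}, and factor its characteristic polynomial over $\Q$. The paper itself does not spell out this argument but simply points to the resulting data at~\cite{vv-smf-data}; your write-up just makes explicit the procedure that underlies that data.
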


We also make note of another interesting computational phenomenon
that merits further investigation.  It is interesting to us because an
analogous phenomenon does not happen in the scalar-valued case.  In the
scalar-valued case of level 1, there are four kinds of modular forms
of even weight: Eisenstein series, Klingen-Eisenstein series,
Saito-Kurokawa lifts and cusp forms that are not lifts.  The first two
are not cuspidal and always have rational coefficients.  Compare this
fact to the following proposition:
\begin{proposition}\label{prop:eis}
Let $k\in\{ 22,26,28,30\}$.  The space of modular forms of weight
$(k,2)$ that are not cusp forms is two dimensional but consists of a
single Galois orbit.
\end{proposition}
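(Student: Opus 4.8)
The plan is to verify the claim computationally for each of the four weights $k\in\{22,26,28,30\}$, since Proposition~\ref{prop:eis} is an assertion about the explicit structure of a finite-dimensional space produced by our algorithm. First I would use Satoh's theorem to write down a basis for $M_{k,2}^{(2)}$ and, separately, identify the cuspidal subspace $S_{k,2}^{(2)}$ via the support condition on positive-definite forms. The quotient $M_{k,2}^{(2)}/S_{k,2}^{(2)}$ is the space of non-cusp forms, and the first step is simply to confirm that this quotient has dimension $2$ for each of the four weights; this follows directly from the dimension counts implicit in Satoh's decomposition, since each summand $[A,B]_2\cdot\C[\dots]_{k-w}$ contributes a known number of generators, and subtracting the cuspidal dimension isolates the non-cuspidal part.

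The substantive claim is that these two non-cuspidal dimensions form a \emph{single} Galois orbit rather than splitting into two rational eigenforms. To establish this, I would apply the eigenform procedure of Section~\ref{sec:eigenforms} restricted to the non-cuspidal part: compute the matrix of $T(2)$ acting on a basis of the two-dimensional non-cuspidal space (using the Hecke formula \eqref{eq:hecke} applied to the Fourier expansions computed up to our discriminant bound), and then examine its characteristic polynomial. The assertion ``single Galois orbit'' is equivalent to saying that this characteristic polynomial is an irreducible quadratic over $\Q$, so that the two eigenforms are conjugate under $\mathrm{Gal}(\Q_F/\Q)$ and have genuinely quadratic, non-rational eigenvalues. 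Thus the proof reduces to exhibiting, for each $k$, the irreducible quadratic minimal polynomial of $\lambda_2$ on the non-cuspidal eigenforms.

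The main obstacle I anticipate is ensuring that the Fourier coefficients are computed to sufficient precision that the matrix $N$ of Section~\ref{sec:eigenforms} is genuinely invertible and the extracted characteristic polynomial is provably correct rather than a numerical artifact. Because the non-cuspidal forms are supported on singular quadratic forms $[0,0,c]$, I would want to confirm that the chosen indices $Q_1,Q_2$ distinguish the two basis elements and that the rational arithmetic (our coefficients are exact rationals, not floating-point approximations) yields an honest polynomial over $\Q$. Once irreducibility of this quadratic is checked over $\Q$ for each of the four weights, the Galois group acts transitively on its two roots, hence transitively on the two non-cuspidal eigenforms, which is exactly the single-Galois-orbit statement. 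The final step is then merely to record the four verifications, noting the contrast with the scalar-valued case where Eisenstein and Klingen-Eisenstein series are always rational.
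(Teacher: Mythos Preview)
Your proposal is correct and takes essentially the same approach as the paper, whose proof consists solely of a pointer to the computed data at \cite{vv-smf-data}; you have described precisely the computation that produces and certifies that data. One minor wording issue: non-cuspidal eigenforms are not supported \emph{only} on singular indices $[0,0,c]$---they have nonzero coefficients at positive-definite indices as well---but your underlying point stands, since cusp forms vanish at singular indices and hence those indices suffice to read off the action of $T(2)$ on the quotient $M_{k,2}^{(2)}/S_{k,2}^{(2)}$.
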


The data that make up the proofs of these propositions can be found at \cite{vv-smf-data}.

\section{Verification of some conjectural congruences}
The most famous modular form is arguably
\begin{equation*}
  \Delta(q)=q-24q^2+252q^3-1472q^4+4830q^5-\ldots=
  \sum_{n=1}^\infty \tau(n)q^n.
\end{equation*}

Ramanujan discovered a number of congruences involving the coefficients
$\tau(n)$, among which is
\begin{equation*}
  \tau(p)\equiv p^{11}+1\pmod{691}\quad\text{for all primes }p.
\end{equation*}

This is part of a more general phenomenon: if a prime $\ell\geq k-1$ divides
the numerator of the zeta-value $\zeta(-k+1)$ (equivalently, the numerator 
of the Bernoulli
number $B_k$), then the constant term of the Eisenstein series $E_k$ is zero
modulo $\ell$.  This can be interpreted to say that there is a congruence 
mod $\ell$ between this Eisenstein
series and some cuspidal eigenform of weight $k$.

As explained in \cite{vanderGeer} and \cite{Harder}, Deligne's work on
attaching families of $\ell$-adic Galois representations to Hecke eigenforms
of degree one allows us to interpret Ramanujan's congruence as taking place
between traces of Frobenius acting on cohomology spaces of local systems.

A more recent development is the construction (initiated by Laumon
\cite{Laumon} and Taylor \cite{Taylor}
and completed by Weissauer \cite{Weissauer}) of families of four-dimensional $\ell$-adic Galois
representations attached to Siegel modular eigenforms of degree two.  It is
then natural to ask about generalizations of Ramanujan's congruence to this
setting.  Building on his study of Eisenstein cohomology for arithmetic
groups, Harder stated a conjecture \cite{Harder} regarding congruences between
classical (degree one) eigenforms and Siegel eigenforms of degree two.  This
statement, which appears below as Conjecture~\ref{conj:Harder}, was verified 
in a number of cases by Faber and van der Geer \cite{vanderGeer}, who 
calculated the number of points on the relevant moduli
spaces over finite fields and related them to the Hecke eigenvalues of Siegel
modular forms.  (This relation was stated as a conjecture in
\cite{vanderGeer}, but it has since been proved by van der Geer and Weissauer
in most cases.  We refer the interested reader to
\cite{BergstromFabervanderGeer} for more details.)  

Bergstr\"om, Faber and van der Geer have extended this point counting
approach to Siegel modular forms of degree three in
\cite{BergstromFabervanderGeer}.  At the same time, they formulated
another conjectural congruence relating Siegel modular forms of degree
two and classical eigenforms, this time via critical values of symmetric
square $L$-functions.  We state this below as Conjecture~\ref{conj:Sym}.
It generalizes a result of Katsurada and Mizumoto for scalar-valued
Siegel modular forms (see~\cite{KatsuradaMizumoto}), and a number of 
vector-valued cases have been
proved by Dummigan in~\cite[Proposition 4.4]{Dummigan2}.

We verify the conjectures by using the data we collected in
Section~\ref{sec:computations} and some custom Sage code that
computes critical values of $L$-functions.  

\subsection{Notation}

The conjectures appear in different forms in \cite{Harder}, \cite{vanderGeer}
and \cite{BergstromFabervanderGeer}.  We follow the approach of 
\cite{BergstromFabervanderGeer} and
adapt it to our notation and the quantities that we compute.

We denote the space of cusp
forms of weight $r$ with respect to the group $\Gamma^{(1)}=\SL(2,\Z)$ by
$S_r^{(1)}$.  Suppose $f\in
S_r^{(1)}$ is a Hecke eigenform; we denote its Hecke eigenvalue with
respect to the operator $T(n)$ by $a_n=a_n(f)$.  The spaces $S_r^{(1)}$ are
finite dimensional, say of dimension $d$; according to Maeda's
conjecture~\cite{Maeda}, 
the eigenvalues $a_n$ are algebraic numbers of degree $d$.  The number
field that contains the coefficients is denoted $\Q_f$.

Fix a prime $p$.  For an eigenform $f\in S_r^{(1)}$, let $\alpha_0$ and 
$\alpha_1$ denote the Satake parameters at $p$ and define
\begin{equation*}
  \mu_{p^\delta}(f) = \alpha_0^\delta + \alpha_0^\delta\alpha_1^\delta\quad
  \text{for }\delta\geq 1.
\end{equation*}

Similarly, for a Siegel eigenform $F\in S_{k,j}^{(2)}$, let $\alpha_0$,
$\alpha_1$, $\alpha_2$ be the Satake parameters at $p$ and define
\begin{equation*}
  \mu_{p^\delta}(F) = \alpha_0^\delta + \alpha_0^\delta\alpha_1^\delta
  + \alpha_0^\delta\alpha_2^\delta + \alpha_0^\delta\alpha_1^\delta\alpha_2^\delta\quad
  \text{for }\delta\geq 1.
\end{equation*}
\subsection{$L$-functions of modular forms}

Let $f(q) = \sum a_n q^n \in S^{(1)}_r$ be an eigenform and consider
\begin{equation*}
L(f, s) = \sum_{n = 1}^\infty \frac{a_n}{n^s}=\prod_p
\left(1-a_pp^{-s}+p^{r-1-2s}\right)^{-1}.
\end{equation*}
After introducing the factor at infinity $L_\infty(f,
s)=\Gamma(s)/(2\pi)^s$, the completed $L$-function
\begin{equation}\label{eqn:lfcn}
\Lambda(f,s) = \frac{\Gamma(s)}{(2\pi)^s} L(f,s) = \int_0^\infty f(i
y)y^{s - 1} dy
\end{equation}
has holomorphic continuation to $\C$ and satisfies the functional
equation
\begin{equation*}
  \Lambda(f, s)= (-1)^{r/2} \Lambda(f, r-s).
\end{equation*}
Its \emph{critical values} occur at $1\leq t\leq r-1$ (of course, the functional
equation implies that it suffices to consider half of this interval).

Manin and Vishik proved that there exist real numbers $\omega_+(f),
\omega_-(f)$, called \emph{periods} of $f$, such that the ratio of the
critical values of $\Lambda(f, s)$ and the periods is algebraic.  More
precisely, define the \emph{algebraic critical values}
\begin{equation*}
  \widetilde{\Lambda}(f, t)=\begin{cases}
    \Lambda(f, t)/\omega_+(f) & \text{if $t$ is even}\\
    \Lambda(f, t)/\omega_-(f) & \text{if $t$ is odd}.
  \end{cases}
\end{equation*}
\begin{theorem}[Manin-Vishik\cite{Manin}]\label{th:manin_omega}
  If $f \in S_r^{(1)}$ is an eigenform and $t$ is an integer satisfying 
  $1\leq t\leq r-1$, then $\widetilde{\Lambda}(f, t)\in\Q_f$.
\end{theorem}

As explained in~\cite{Harder}, the denominators of certain classes in the
cohomology groups of local systems on the moduli space of abelian surfaces
should be expressed in terms of critical values $\widetilde{\Lambda}(f,
t)$.  Harder conjectured that the appearance of certain large primes
in these denominators should imply the existence of congruences between
eigenvalues of forms of degree one and two.

\begin{conjecture}[Harder]\label{conj:Harder}
  Let $f\in S_r^{(1)}$ be a Hecke eigenform with coefficient field $\Q_f$
  and let $\ell$ be an ordinary prime in $\Q_f$ (i.e. such that the $\ell$-th
  Hecke eigenvalue of $f$ is not divisible by $\ell$).  Suppose
  $s\in\mathbb{N}$
  is such that $\ell^s$ divides the algebraic critical value $\widetilde\Lambda(f, t)$.
  Then there exists a Hecke eigenform $F\in S_{k, j}^{(2)}$, where
  $k=r-t+2$, $j=2t-r-2$, such that
  \begin{equation*}
    \mu_{p^\delta}(F)\equiv \mu_{p^\delta}(f) + p^{\delta(k+j-1)}
    + p^{\delta(k-2)}\pmod{\ell^s}
  \end{equation*}
  for all prime powers $p^\delta$.
\end{conjecture}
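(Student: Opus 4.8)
Because Conjecture~\ref{conj:Harder} asserts a congruence ``for all prime powers $p^\delta$'' tying critical $L$-values to Galois-theoretic data, I do not expect a full proof to be attainable by the elementary methods available here; the realistic plan, and the one the computations of Section~\ref{sec:computations} make possible, is to \emph{verify} the statement in concrete cases. I first record the arithmetic that fixes the relevant spaces: the restriction $j=2$ forces $2t-r-2=2$, hence $t=(r+4)/2$ and $k=r/2$, so the Siegel eigenforms in question live precisely in the spaces $S^{(2)}_{r/2,2}$ treated above.

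First I would fix a classical eigenform $f\in S_r^{(1)}$ and, using Theorem~\ref{th:manin_omega} together with the code that evaluates critical values of $L$-functions, compute the algebraic critical value $\widetilde\Lambda(f,t)\in\Q_f$; factoring its numerator produces the candidate pairs $(\ell,s)$, and I retain only those $\ell$ that are ordinary for $f$. For each such pair I then need a Siegel eigenform $F\in S^{(2)}_{k,2}$, which I obtain from Satoh's description (Section~\ref{sec:satoh}) and the diagonalization of $T(2)$ in Section~\ref{sec:eigenforms}.

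The comparison of $\mu_{p^\delta}$-values is now purely mechanical. On the Siegel side, $\mu_{p^\delta}(F)=\alpha_0^\delta(1+\alpha_1^\delta)(1+\alpha_2^\delta)$ is exactly the $\delta$-th power sum of the four roots of the spin Euler factor, i.e.\ of the polynomial in Proposition~\ref{prop:ram} (whose root-sum is $\lambda_p$ and whose root-product is $p^{4k-2}$, matching the Satake image $\Omega(T(p))=x_0(1+x_1)(1+x_2)$); by Newton's identities it is therefore a polynomial in $\lambda_p(F)$, $\lambda_{p^2}(F)$ and $p$, and these eigenvalues I extract directly from the Hecke formula~\eqref{eq:hecke}. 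Likewise $\mu_{p^\delta}(f)$ is the $\delta$-th power sum of the two roots of $1-a_pX+p^{r-1}X^2$, a polynomial in $a_p$ and $p$. It remains to reduce $\mu_{p^\delta}(F)-\mu_{p^\delta}(f)-p^{\delta(k+j-1)}-p^{\delta(k-2)}$ modulo a chosen prime above $\ell$ and confirm that it lies in $\mathfrak l^s$ for each $(p,\delta)$ within computational reach.

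The obstacle is arithmetic rather than structural. The values $\mu_{p^\delta}(F)$ lie in the eigenvalue field $\Q_F$, which may be quadratic or larger, so the congruence only makes sense after fixing a prime $\mathfrak l\mid\ell$ in a common field containing $\Q_f$ and $\Q_F$; one must then check that $F$ is chosen in the Galois orbit compatible with that choice of $\mathfrak l$, so that the two sides are reduced consistently. Equally delicate in practice is recognizing the algebraic eigenvalues $\lambda_p(F),\lambda_{p^2}(F)$ exactly from finite-precision Fourier data, since an error here corrupts the reduction modulo $\ell^s$. And, since only finitely many pairs $(p,\delta)$ can be tested, such a computation confirms the conjecture for the $f$ and $(p,\delta)$ examined rather than establishing the ``for all $p^\delta$'' assertion.
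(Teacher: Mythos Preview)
Your proposal is correct in spirit and essentially mirrors the paper: since the statement is a \emph{conjecture}, the paper offers no proof but rather the numerical verification recorded as Theorem~\ref{thm:Harder}, and your plan---specialize to $j=2$ so $t=r/2+2$, $k=r/2$; factor $\widetilde\Lambda(f,t)$ to locate ordinary $\ell$; diagonalize $T(2)$ on the Satoh basis; express $\mu_{p^\delta}$ via $\lambda_p,\lambda_{p^2}$; reduce and compare---is exactly the paper's procedure.

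Two minor procedural divergences are worth flagging. First, the paper does not compute $\widetilde\Lambda(f,t)$ directly (the periods $\omega_\pm(f)$ are transcendental and not otherwise known) but instead forms floating-point \emph{ratios} of critical values of the same parity and then recognizes their minimal polynomials via LLL; your description of ``computing the algebraic critical value'' should be read this way. Second, rather than running Newton's identities out to each needed $\delta$, the paper proves algebraically (Proposition~\ref{prop:cubes}) that the $\delta=3$ congruence follows formally from $\delta=1,2$, so only $\lambda_p$ and $\lambda_{p^2}$ are ever computed; this saves the substantial Fourier precision that $\lambda_{p^3}$ would demand. Your concern about matching primes $\mathfrak l\mid\ell$ in $\Q_f$ and $\Q_F$ is handled in the paper by reducing the minimal polynomials of $a_p$ and $\lambda_p$ modulo $\ell$ and checking that \emph{some} pair of roots in $\F_\ell$ satisfies the congruence, which amounts to the same compatibility check you describe.
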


\subsubsection{Computation of the quantities $\mu_{p^\delta}$}
The first step in our numerical verification of the conjecture is to
compute, as described in Section~\ref{sec:computations}, the Hecke 
eigenforms in various weights and their
corresponding Hecke eigenvalues $\lambda_p(F)$ and $\lambda_{p^2}(F)$.
Once we have those, the second step is to relate the Hecke eigenvalues
to the values $\mu_{p^\delta}(F)$.  We do this by relating the
polynomials that define $\mu_{p^\delta}(F)$ to the expressions of
$\lambda_p(F)$ in terms of the Satake parameters
$\alpha_0,\alpha_1,\alpha_2$. 

\begin{example}
Let $\lambda_i(p^2)=\Omega(T_i(p^2))|_{x_i\leftarrow \alpha_i}$.  Note
$\lambda_2(p^2)=p^{2k+j-6}$ from the definition of the slash
operator.  Then we have the equations
\begin{align*}
\lambda_p^2 &= \lambda_0(p^2)+(p+1)\lambda_1(p^2)+(p^2+1)(p+1)\lambda_2(p^2)\\
\lambda_{p^2} &= \lambda_0(p^2)+\lambda_1(p^2)+\lambda_2(p^2)
\end{align*}
where the first equation comes from Theorem~\ref{thm:hecke_relation}.

We compute $\lambda_p,\lambda_{p^2}$ and know $\lambda_2(p^2)$.  This
allows us to solve for $\lambda_0(p^2)$ and $\lambda_1(p^2)$.
Then we observe
\begin{align*}
\mu_p &=\lambda_p\\
\mu_{p^2}&=2\lambda_{p^2}-\lambda_p^2+2p^{2k+j-4}\\
\mu_{p^3}&=\left(3\lambda_{p^2}-2\lambda_p^2+3(p+1)p^{2k+j-4}\right)\lambda_p.
\end{align*}
\end{example}




\subsubsection{Computation of the congruence primes $\ell$}
We consider the ratios
\begin{equation*}
\Lambda(f,2) : \Lambda(f,4) : \dots : \Lambda(f,r - 2) 
\text{ and } \Lambda(f,1) : \Lambda(f,3) : \dots : \Lambda(f,r - 1) 
\end{equation*}
which by Theorem~\ref{th:manin_omega} are in $\Q_f$.

We compute these ratios of critical values as floating point numbers
in Sage \cite{Sage}.  This is done via an implementation of
\eqref{eqn:lfcn} due to Dokchitser~\cite{Dokchitser}.  We take these 
floating point numbers and find their
minimal polynomial using fplll~\cite{fplll}, an implementation of the
LLL lattice
reduction algorithm wrapped in Sage.  We
provide an example to illustrate our process and summarize our
computations in Table~\ref{tbl:Harder}.

\begin{example}
  Let $g \in S_{32}^{(1)}$, a two-dimensional space of cusp forms.  
  We will calculate the ratio of critical values
\begin{equation*}
\Lambda(g,1) : \Lambda(g,3) : \Lambda(g,5): \Lambda(g,7):
\Lambda(g,9):\Lambda(g,11):\Lambda(g,13):\Lambda(g,15).
\end{equation*}
As we are interested only in the ratio, we compute
\begin{multline*}
\tfrac{\Lambda(g,1)}{\Lambda(g,1)} : \tfrac{\Lambda(g,3) }{\Lambda(g,1)}
: \tfrac{\Lambda(g,5) }{\Lambda(g,1)}: \tfrac{\Lambda(g,7)
}{\Lambda(g,1)}:\tfrac{\Lambda(g,9) }{\Lambda(g,1)}:\tfrac{\Lambda(g,11)
}{\Lambda(g,1)}:\tfrac{\Lambda(g,13)
}{\Lambda(g,1)}:\tfrac{\Lambda(g,15) }{\Lambda(g,1)}=\\
1: 0.045375\dots:0.002369\dots:0.000143\dots: 0.000010\dots\\
8.65221\dots\times 10^{-7}:
8.50052\dots\times 10^{-8}: 9.23745\times 10^{-9}
\end{multline*}
using a Sage implementation of $\Lambda(g,s)$.  Then we find the
minimal polynomial of each ratio; e.g.,
$\tfrac{\Lambda(3)}{\Lambda(1)}$ has minimal polynomial
\begin{equation*}
1254224510x^2 - 471820065x + 18826702.
\end{equation*}
\end{example}

\subsubsection{Checking ordinarity of the primes $\ell$}

We use a simple algorithm that is very fast but uses large amounts of
storage.  Let $d$ be the dimension of $S_r^{(1)}$, and suppose we want
to check that $\ell$ is ordinary for all Hecke eigenforms in
$S_r^{(1)}$.  We proceed as follows: 
(a) compute the Victor Miller basis for $S_r^{(1)}$ to a precision of 
about $d\ell$ coefficients; 
(b) compute the matrix of the Hecke operator $T_\ell$ acting on this 
basis; 
(c) reduce the matrix modulo $\ell$ and check whether it is invertible.

This allowed us to verify that most primes $\ell$ appearing
in Table~\ref{tbl:Harder} are ordinary.
The current understanding of the distribution of non-ordinary primes 
is rather limited, but numerical evidence seems to indicate that they 
are very rare in level one, so it would be surprising to find a prime 
$\ell>r$ that divides an algebraic critical value and is non-ordinary.

\subsubsection{Verification of the congruences}

We observe that there are two cases: when
$\delta=1$ and when $\delta >  1$.  The difference is that in the
first case the congruence reduces to a congruence on the Hecke
eigenvalue $\lambda_p$ while in the second case we require both $\lambda_p$
and $\lambda_{p^2}$.  The effect of this difference is that we can
verify many more congruences when $\delta=1$ than when $\delta >
1$; this is due to the number of coefficients needed to compute
$\lambda_p$ as compared to the number needed to compute $\lambda_p$
and $\lambda_{p^2}$.

The way the actual verification works is essentially the same starting
from the point where $\mu_{p^\delta}(F)$ and $\mu_{p^\delta}(f)$ have
been computed.  Each side of the congruence mod $\ell^s$ is an 
algebraic number in $\Q_F$ and $\Q_f$ respectively.  In the cases we 
have considered, the exponent $s$ appearing in the conjecture was always
$1$.
We compute the minimal polynomial $m(x)$ of the coefficient $a_p$ of
$f$ and the minimal polynomial $M(x)$ of the Hecke eigenvalue
$\lambda_p$ of $F$.  Then we look at the roots of $m$ and $M$ in
$\F_\ell$. The conjecture holds if for some choice of root of $m$
and some choice of root of $M$ the congruence holds.

The following statement summarizes our results on
Conjecture~\ref{conj:Harder}.
\begin{theorem}\label{thm:Harder}
  Let $r\leq 60$ be a multiple of $4$.  If $f\in S^{(1)}_r$ is a Hecke
  eigenform with coefficient field $\Q_f$ and $\ell$ is an ordinary
  prime in $\Q_f$ that divides the algebraic critical value
  $\widetilde{\Lambda}(f, r/2+2)$, then there exists a Hecke eigenform 
  $F\in S_{r/2,2}^{(2)}$ such that
  \begin{equation*}
    \mu_{p^\delta}(F)\equiv \mu_{p^\delta}(f) + p^{\delta(r/2+1)} +
    p^{\delta(r/2-2)}\pmod{\ell}
  \end{equation*}
  for
  \begin{equation*}
    p^\delta\in\{2, 3, 4, 5, 7, 8, 9, 11, 13, 17, 19, 23, 25, 27, 29,
    31, 125\}.
  \end{equation*}
\end{theorem}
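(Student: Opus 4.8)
The plan is to treat Theorem~\ref{thm:Harder} as the specialization of Conjecture~\ref{conj:Harder} obtained by setting $t=r/2+2$, so that $k=r-t+2=r/2$ and $j=2t-r-2=2$, and then to verify the resulting congruence directly by computation for each admissible weight. First I would enumerate the Hecke eigenforms $f\in S_r^{(1)}$ for every multiple of $4$ with $r\leq 60$ and record their eigenvalues $a_p$; this is standard and can be read off from the Victor Miller basis already employed in the ordinarity check. In parallel, on the Siegel side, I would use Satoh's theorem from Section~\ref{sec:satoh} to build a basis of $S_{r/2,2}^{(2)}$, diagonalize $T(2)$ as in Section~\ref{sec:eigenforms} to obtain a basis of eigenforms $F$, and then apply the Hecke formula \eqref{eq:hecke} to extract $\lambda_p(F)$ and $\lambda_{p^2}(F)$ for each prime power $p^\delta$ in the prescribed list.

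The next step is to pin down the prime $\ell$ and confirm that the hypotheses of the conjecture hold. Using the Dokchitser implementation of \eqref{eqn:lfcn} I would compute the relevant ratios of critical values $\Lambda(f,t)$ to high floating-point precision, recognize them as elements of $\Q_f$ via LLL, and thereby obtain the algebraic critical value $\widetilde\Lambda(f,r/2+2)$; its prime factorization singles out the candidate congruence primes $\ell$. For each such $\ell$ I would certify ordinarity by reducing the matrix of $T_\ell$ on the Victor Miller basis modulo $\ell$ and testing invertibility, exactly as described above.

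With the eigenvalue data in hand, I would convert the Hecke information on both sides into the symmetric quantities $\mu_{p^\delta}$. On the elliptic side this follows immediately from the definition of $\mu_{p^\delta}(f)$ in terms of the Satake parameters $\alpha_0,\alpha_1$; on the Siegel side I would use the relations expressing $\mu_p,\mu_{p^2},\mu_{p^3}$ in terms of $\lambda_p(F)$ and $\lambda_{p^2}(F)$ derived above, which rest on Theorem~\ref{thm:hecke_relation} together with the known value $\lambda_2(p^2)=p^{2k+j-6}$. The verification then reduces to an arithmetic check: I would form the minimal polynomial of $a_p$ and that of $\lambda_p(F)$, reduce both modulo $\ell$, and confirm that some pair of roots over $\F_\ell$ makes the congruence $\mu_{p^\delta}(F)\equiv \mu_{p^\delta}(f)+p^{\delta(r/2+1)}+p^{\delta(r/2-2)}$ hold for every $p^\delta$ in the list.

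The main obstacle is precision, and it bites hardest for the prime powers with $\delta>1$, namely $4,8,9,25,27,125$, where the congruence genuinely involves $\lambda_{p^2}(F)$ and not merely $\lambda_p(F)$. Extracting $\lambda_{p^2}(F)$ from \eqref{eq:hecke} requires Fourier coefficients $C_F$ at quadratic forms of substantially larger discriminant than are needed for $\lambda_p(F)$, so the computation is constrained by the discriminant bound (here $3000$) to which the Igusa generators, and hence the eigenforms, have been expanded. This is precisely why the verified list terminates at $125=5^3$: reaching larger prime powers would demand recomputing the basis to higher precision rather than any new idea. A secondary concern is ensuring that the LLL-based recognition of $\widetilde\Lambda(f,r/2+2)$ is numerically trustworthy, so that the primes $\ell$ are correctly identified and no spurious or missing congruence prime contaminates the conclusion.
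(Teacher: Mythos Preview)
Your proposal is correct and follows the paper's approach closely: specialize Conjecture~\ref{conj:Harder} to $t=r/2+2$, compute the critical values and identify the congruence primes $\ell$, check ordinarity, compute the Siegel eigenvalues via Satoh's basis and~\eqref{eq:hecke}, and verify the congruence modulo~$\ell$ for each $p^\delta$ in the list. The one structural difference is in the handling of the cube cases $p^\delta\in\{8,27,125\}$: you propose to compute $\mu_{p^3}(F)$ directly from $\lambda_p$ and $\lambda_{p^2}$ and check the congruence, whereas the paper packages this step as the standalone Proposition~\ref{prop:cubes}, which shows algebraically that the $\delta=3$ congruence is a formal consequence of the $\delta=1$ and $\delta=2$ congruences (using the same identity~\eqref{eqn:G3} you invoke). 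The two are mathematically equivalent and use identical data; the paper's formulation simply makes explicit that no additional numerical verification is needed for cubes once the $\delta\le 2$ cases are in hand. The paper also records that for $r\le 28$ no ordinary prime divides $\widetilde\Lambda(f,r/2+2)$, so the statement is vacuous there---something your procedure would discover automatically.
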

\begin{proof}
  For weights $r\leq 28$, we have verified that there are no ordinary
  primes $\ell$ dividing $\widetilde{\Lambda}(f, r/2+2)$, so the
  statement is vacuously true.

  For weights $32\leq r\leq 60$ and
  \begin{equation*}
    p^\delta\in\{2, 3, 4, 5, 7, 9, 11, 13, 17, 19, 23, 25, 29, 31\}
  \end{equation*}
  we have verified the congruence for all large primes $\ell$ dividing
  the algebraic critical value.  The results are listed in
  Table~\ref{tbl:Harder}.

  The remaining cases $p^\delta\in\{8, 27, 125\}$ follow from the rest
  by Proposition~\ref{prop:cubes}.
\end{proof}

\begin{table}[h]
\begin{tabular}{|r|r|r|r|r|r|r|} \hline
  $r$ & $t$ &
  large $\ell\mid\textrm{Norm}(\widetilde\Lambda(f,t))$ & 
  $(k, j)$ & $\dim S_{k,j}^{(2)}$\\\hline
 $32$ & $18$ & $211$            & $(16, 2)$ &  $2$ \\ \hline
 $36$ & $20$ & $269741$         & $(18, 2)$ &  $2$ \\ \hline
 $40$ & $22$ & $509$            & $(20, 2)$ &  $3$ \\
      &      & $1447$           &           &      \\ \hline
 $44$ & $24$ & $205157$         & $(22, 2)$ &  $5$ \\ \hline
 $48$ & $26$ & $168943$         & $(24, 2)$ &  $5$ \\ \hline
 $52$ & $28$ & $173$            & $(26, 2)$ &  $8$ \\
      &      & $929$            &           &      \\
      &      & $4261$           &           &      \\
      &      & * $434167$       &           &      \\ \hline
 $56$ & $30$ & $173$            & $(28, 2)$ & $10$ \\
      &      & $1721$           &           &      \\
      &      & $38053$          &           &      \\
      &      & $1547453$        &           &      \\ \hline
 $60$ & $32$ & * $325187$       & $(30, 2)$ & $11$ \\ 
      &      & * $32210303$     &           &      \\
      &      & * $427092920047$ &           &      \\ \hline
\end{tabular}
\caption{A summary of the cases verified numerically for the proof of
  Theorem~\ref{thm:Harder}.  (The primes $\ell$ marked with a *
  have not been checked to be ordinary.)}
\label{tbl:Harder}
\end{table}

\subsection{Symmetric square $L$-functions of modular forms}
It is possible to associate higher-degree $L$-functions to modular forms,
by using various tensorial constructions.  We describe the $L$-function
attached to the symmetric square of a modular form.

Fix a Hecke eigenform $f\in S_r^{(1)}$ and 
let $\alpha_p,\beta_p$ be the roots of the polynomial
$X^2-a_pX+p^{r-1}$.  The associated \emph{symmetric square
$L$-function} is 
\begin{equation*}
L(\Sym^2 f,s)=\prod_p
\left((1-\alpha_p^2p^{-s})(1-\beta_p^2p^{-s})(1-\alpha_p\beta_pp^{-s})\right)^{-1}.
\end{equation*}
We take as factor at infinity
\begin{equation*}
  L_\infty(\Sym^2 f,s)=\frac{\Gamma(s)}{(2\pi)^s}
  \frac{\Gamma((s+2-r)/2)}{\pi^{(s+2-r)/2}}
\end{equation*}
and set
\begin{equation*}
  \Lambda(\Sym^2 f,s)=L_\infty(\Sym^2 f, s)L(\Sym^2 f, s).
\end{equation*}
Then $\Lambda(\Sym^2 f, s)$ has holomorphic continuation to $\C$ and
satisfies the functional equation
\begin{equation*}
  \Lambda(\Sym^2 f, s)=\Lambda(\Sym^2 f,2r-1-s).
\end{equation*}
We define the \emph{algebraic critical values}
\begin{equation}\label{eqn:sym}
  \widetilde{\Lambda}(\Sym^2 f, t)=
  \frac{L(\Sym^2 f, t)}{\pi^{2t-r+1}\langle f,f\rangle}
  \qquad\text{for }t=r,r+2,\ldots,2r-2,
\end{equation}
where $\langle \cdot,\cdot \rangle$ denotes the Petersson inner product.
(It is possible
  to express the algebraic critical values as quotients of the completed
  $L$-function $\Lambda(\Sym^2 f, t)$, which would be closer to the
  treatment of the usual $L$-function as given in the previous section.
  We prefer to take a quotient of $L(\Sym^2 f, t)$ instead, as this is
  the definition used in much of the existing work on symmetric square
$L$-values.)

\begin{theorem}[Zagier~\cite{Zagier}]
  If $f\in S_r^{(1)}$ is an eigenform and $t$ is even such that $r\leq
  t\leq 2r-2$, then $\widetilde{\Lambda}(\Sym^2 f, t)$ is an algebraic
  number.
\end{theorem}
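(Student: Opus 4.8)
The plan is to realize the symmetric square value through the Rankin--Selberg method and reduce the claim to the algebraicity of a single normalized Petersson quotient. First I would introduce the Rankin--Selberg Dirichlet series $D(w)=\sum_{n\geq 1}a_n^2\,n^{-w}$ and record the Euler-factor identity $L(\Sym^2 f,w)=\zeta\bigl(2(w-r+1)\bigr)\,D(w)$, which follows from $\alpha_p\beta_p=p^{r-1}$ together with the standard generating function $\sum_{k\geq 0}a_{p^k}^2 T^k=\bigl(1-\alpha_p^2\beta_p^2T^2\bigr)\big/\bigl((1-\alpha_p^2T)(1-\alpha_p\beta_pT)(1-\beta_p^2T)\bigr)$. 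Next I would unfold the integral $I(s)=\int_{\Gamma^{(1)}\backslash\PH^{(1)}}|f(z)|^2\,y^r\,E(z,s)\,\tfrac{dx\,dy}{y^2}$ against the real-analytic Eisenstein series $E(z,s)=\sum_{\gamma\in\Gamma_\infty\backslash\Gamma^{(1)}}\mathrm{Im}(\gamma z)^s$; integrating over $x\in[0,1]$ collapses the cross terms and yields $I(s)=\Gamma(r+s-1)\,(4\pi)^{-(r+s-1)}\,D(r+s-1)$. Setting $s=t-r+1$ (a positive integer since $t\geq r$) expresses $D(t)$, and hence $L(\Sym^2 f,t)$, through the Petersson pairing $I(s)=\langle f\cdot E(\cdot,s),f\rangle$.

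The arithmetic input is that, at the critical parameter, the Eisenstein series is, up to an explicit power of $\pi$ and rational factors, the image under iterated Maass--Shimura raising operators of a \emph{holomorphic} Eisenstein series whose Fourier coefficients are rational (Bernoulli) numbers. Thus $g:=\pi^{s}E(\cdot,s)$ is a nearly holomorphic modular form of weight $0$ with algebraic Fourier coefficients, so $f\cdot g$ is nearly holomorphic of weight $r$. I would then apply Sturm's holomorphic projection operator $\pi_{\mathrm{hol}}$: since $f$ is a holomorphic cusp form and the kernel of $\pi_{\mathrm{hol}}$ is orthogonal to $S_r^{(1)}$, we have $\langle f g,f\rangle=\langle \pi_{\mathrm{hol}}(fg),f\rangle$, and Sturm's explicit coefficient formula shows $h:=\pi_{\mathrm{hol}}(fg)\in S_r^{(1)}$ again has algebraic Fourier coefficients. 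Because $f$ is a normalized Hecke eigenform, expanding $h$ in a Galois-stable eigenbasis produces an $f$-component $c_f f$ with $c_f\in\overline{\Q}$; by orthogonality of distinct eigenforms, $\langle h,f\rangle/\langle f,f\rangle=c_f\in\overline{\Q}$.

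Finally I would collect the powers of $\pi$. Combining the three relations gives
\begin{equation*}
\frac{L(\Sym^2 f,t)}{\langle f,f\rangle}=\zeta\bigl(2(t-r+1)\bigr)\,\frac{(4\pi)^{t}}{\Gamma(t)}\,\pi^{-s}\,\frac{\langle f g,f\rangle}{\langle f,f\rangle},\qquad s=t-r+1.
\end{equation*}
Here $\zeta\bigl(2(t-r+1)\bigr)$ is a rational multiple of $\pi^{2(t-r+1)}$, the factor $\Gamma(t)=(t-1)!$ is rational, and the last quotient is algebraic; the total exponent of $\pi$ is $2(t-r+1)+t-(t-r+1)=2t-r+1$, exactly the power removed in the definition of $\widetilde{\Lambda}(\Sym^2 f,t)$. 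Hence $\widetilde{\Lambda}(\Sym^2 f,t)\in\overline{\Q}$.

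The main obstacle is the arithmetic of the Eisenstein series in the second paragraph: proving the precise statement that $\pi^{s}E(z,s)$ is nearly holomorphic with algebraic Fourier coefficients (this is the Shimura--Zagier input), together with the bookkeeping that pins the surviving power of $\pi$ to exactly $-s$. This is also where the hypothesis that $t$ is even is genuinely used: it is equivalent to $t$ being a critical integer for $\Sym^2 f$, which is precisely the condition guaranteeing that $E(\cdot,s)$ arises as such a Maass--Shimura derivative and that the holomorphic projection lands in $S_r^{(1)}$ with algebraic coefficients and the claimed period $\pi^{2t-r+1}\langle f,f\rangle$. A secondary difficulty is the boundary value $t=r$, where $s=1$ and $E(z,s)$ has a pole; here I would use that the residue of $E(z,s)$ at $s=1$ is constant in $z$ and therefore pairs to zero against the cusp form $f$, so only the finite part (the Kronecker limit formula) contributes and the remaining steps go through unchanged.
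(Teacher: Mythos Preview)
The paper does not prove this theorem; it is stated with attribution to Zagier and immediately used as input. So there is no in-paper proof to compare against, and the question is whether your sketch stands on its own.

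Your outline is the classical Rankin--Selberg/Shimura route and is essentially correct. The Euler-product identity $L(\Sym^2 f,w)=\zeta(2(w-r+1))D(w)$, the unfolding $I(s)=\Gamma(r+s-1)(4\pi)^{-(r+s-1)}D(r+s-1)$, the passage to $\langle fg,f\rangle/\langle f,f\rangle$ via holomorphic projection onto an algebraic eigenbasis, and the final $\pi$-count all check out. You have also correctly located the real work: the assertion that $\pi^{s}E(\cdot,s)$ is nearly holomorphic with algebraic Fourier coefficients at the relevant integers. In Zagier's original paper this is done by an explicit Fourier expansion (divisor sums and terminating hypergeometric polynomials in $1/y$), which in fact yields rationality over $\Q_f$---the refinement the paper invokes immediately after the theorem via the Shimura reference.

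One genuine slip is your treatment of the boundary case $t=r$. There $s=1$, and while you are right that the residue of $E(z,s)$ is constant and pairs to zero against $f$, the ``finite part'' you propose to use is the Kronecker limit term $\log|\eta(z)|^{4}$, which is \emph{not} nearly holomorphic, so your holomorphic-projection machinery does not apply to it. The clean way around this is to notice that the residue calculation itself already gives $\langle f,f\rangle=\dfrac{(r-1)!}{2^{2r-1}\pi^{r+1}}\,L(\Sym^2 f,r)$ (the identity the paper quotes a few lines later), whence $\widetilde{\Lambda}(\Sym^2 f,r)=2^{2r-1}/(r-1)!$ is visibly rational. With that adjustment, your proposal is a faithful sketch of the standard proof.
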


Moreover, and this is useful for our computations, it can be shown
that $\widetilde{\Lambda}(\Sym^2 f, t)\in\Q_f$, see \cite{Shimura}.

\begin{conjecture}[Bergstr\"om-Faber-van der Geer-Harder]\label{conj:Sym}
  Let $f\in S_r^{(1)}$ be a Hecke eigenform with coefficient field $\Q_f$
  and let $\ell$ be a large prime in $\Q_f$.  Suppose $s\in\mathbb{N}$
  is such that $\ell^s$ divides the algebraic critical value 
  $\widetilde\Lambda(\Sym^2 f, t)$.
  Then there exists a Hecke eigenform $F\in S_{k, j}^{(2)}$, where
  $k=t-r+2$, $j=2r-t-2$, such that
  \begin{equation*}
    \mu_{p^\delta}(F)\equiv \mu_{p^\delta}(f)(p^{\delta(k-2)} + 1) \pmod{\ell^s}
  \end{equation*}
  for all prime powers $p^\delta$.
\end{conjecture}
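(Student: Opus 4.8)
The plan is to verify Conjecture~\ref{conj:Sym} numerically in a range of cases, yielding a theorem parallel to Theorem~\ref{thm:Harder}. Since we work exclusively with $j=2$, the constraint $j=2r-t-2$ forces $t=2r-4$ and hence $k=r-2$; the relevant critical point is therefore $\widetilde\Lambda(\Sym^2 f, 2r-4)$, which is an even argument lying in the admissible range $r\le t\le 2r-2$ for every $r\ge 4$. The overall strategy reuses the machinery already assembled for the Harder conjecture, the only genuinely new ingredient being the computation of the symmetric square critical values in place of the ordinary ones.

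First I would compute, for each weight $r$ of interest, the elliptic eigenforms $f\in S_r^{(1)}$ together with their algebraic critical value $\widetilde\Lambda(\Sym^2 f, 2r-4)$ as defined in \eqref{eqn:sym}. By Zagier's theorem and Shimura's refinement this value lies in $\Q_f$, so after evaluating it to high floating-point precision I would recover its minimal polynomial by LLL and factor the resulting norm to isolate the large primes $\ell$, exactly as in the computation of the congruence primes for Conjecture~\ref{conj:Harder}. For each such $\ell$ I would then compute $\mu_{p^\delta}(f)=\alpha_0^\delta+\alpha_0^\delta\alpha_1^\delta=\alpha_p^\delta+\beta_p^\delta$ from the Satake parameters of $f$ at $p$, which are determined by $a_p$ and the normalization $\alpha_0^2\alpha_1=p^{r-1}$ via the Newton power-sum identities.

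In parallel I would compute, as in Section~\ref{sec:computations}, a basis of Hecke eigenforms $F\in S_{r-2,2}^{(2)}$ and their eigenvalues $\lambda_p(F)$ and $\lambda_{p^2}(F)$. Using Theorem~\ref{thm:hecke_relation} together with the identities relating $\lambda_0(p^2),\lambda_1(p^2),\lambda_2(p^2)$ to the $\mu_{p^\delta}(F)$ that were derived in the Example following Conjecture~\ref{conj:Harder}, I would solve for the values $\mu_{p^\delta}(F)$. Finally, for each candidate prime $\ell$ I would test the congruence $\mu_{p^\delta}(F)\equiv\mu_{p^\delta}(f)\bigl(p^{\delta(k-2)}+1\bigr)\pmod{\ell^s}$ by reducing the minimal polynomials of the two sides modulo the primes above $\ell$ and checking whether some compatible choice of roots satisfies the relation, precisely the root-matching test used for Theorem~\ref{thm:Harder}.

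The hard part will be the accurate evaluation of the symmetric square critical values. Unlike the standard $L$-function, $\widetilde\Lambda(\Sym^2 f, t)$ involves the transcendental Petersson norm $\langle f,f\rangle$ and a factor at infinity built from a product of Gamma functions, so obtaining enough correct digits for LLL to recover a minimal polynomial whose norm exposes a genuinely large prime, rather than a numerical artifact, is delicate; I would rely on Raum's code for these values and cross-check the recovered algebraic numbers against the functional equation. A secondary difficulty is that when $\dim S_r^{(1)}>1$ the field $\Q_f$ is a nontrivial extension, so both the critical value and the candidate congruence must be treated Galois-orbit by Galois-orbit, with the final root-matching carried out over the residue fields above $\ell$ rather than over $\F_\ell$ directly.
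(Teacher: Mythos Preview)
Your plan is correct and matches the paper's approach essentially step for step: the paper also specializes to $j=2$ (so $t=2r-4$, $k=r-2$), computes $\widetilde\Lambda(\Sym^2 f,2r-4)$ to high precision using Dokchitser's calculator and Raum's code, recovers the minimal polynomial via LLL, factors the norm to extract the congruence primes, and then checks the congruence against the Siegel eigenvalue data already computed for Theorem~\ref{thm:Harder} by the same root-matching test. Two small points where the paper differs in detail: the Petersson norm $\langle f,f\rangle$ is obtained not as an independent transcendental computation but via the identity $\langle f,f\rangle=\dfrac{(r-1)!}{2^{2r-1}\pi^{r+1}}L(\Sym^2 f,r)$, so only symmetric-square $L$-values are ever evaluated; and the paper cross-checks the recovered critical values against Dummigan's tables and a trace computed by Lanphier rather than against the functional equation. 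The paper also invokes Proposition~\ref{prop:cubes} to get the cases $p^\delta\in\{8,27,125\}$ for free from $\delta=1,2$, which you do not mention but which your data would supply anyway.
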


The case $j=0$ concerns scalar-valued Siegel modular forms.  The first
examples of such congruences were found by Kurokawa~\cite{Kurokawa2}, who
conjectured that they should be governed by certain primes dividing the
numerators of algebraic critical values.  Kurokawa's conjecture was
recently proved by Katsurada and Mizumoto, who even extended these results 
to the case of scalar-valued Siegel modular forms of arbitrary degree
(see~\cite[Theorem 3.1]{KatsuradaMizumoto}).

In the vector-valued setting, the congruence in Conjecture~\ref{conj:Sym} 
was proved for the six
rational eigenforms of degree one (weights $12$, $16$, $18$, $20$, $22$,
$26$) by Dummigan in~\cite[Proposition 4.4]{Dummigan2}.  (Dummigan has
indicated that it should be possible to extend his Proposition~4.4 to 
higher weights, using a pullback formula as in Katsurada and
Mizumoto~\cite{KatsuradaMizumoto}.)
\begin{remark}
  The conjecture does not specify what is meant by a \emph{large} prime
  $\ell$.  Dummigan's result uses $\ell>2r$.  In the cases we have
  verified (see Theorem~\ref{thm:Sym} for details), it was sufficient to
  take $\ell>2$.
\end{remark}

Our numerical verification of Conjecture~\ref{conj:Sym} follows the
approach of the last section.  We highlight only the essential differences.

\subsubsection{Computation of the congruence primes $\ell$}
We find the appropriate primes $\ell$ by computing the algebraic
critical values directly from~\eqref{eqn:sym}.  The squared-norm
$\langle f, f\rangle$ of $f$ can be obtained from the identity
\begin{equation*}
  \langle f, f\rangle = \frac{(r-1)!}{2^{2r-1}\pi^{r+1}}L(\Sym^2 f, r),
\end{equation*}
so all we require is high-precision evaluation of the symmetric square
$L$-function at various points.  For this we use Dokchitser's
$L$-function calculator~\cite{Dokchitser} as wrapped in Sage, as well as
some Sage code made available to us by Martin Raum.

Having obtained a sufficiently precise floating point approximation to
the algebraic number $\widetilde{\Lambda}(\Sym^2 f, t)$, we then find
its minimal polynomial.  The congruence primes $\ell$ are the
primes larger than $2$ occurring in the factorization of the numerator 
of the norm of $\widetilde{\Lambda}(\Sym^2 f, t)$.

The critical values we obtain in this way agree with the ones computed
by Dummigan in the case of rational eigenforms\footnote{Dummigan
confirmed that a few of the factorizations from Table~1
in~\cite{Dummigan1} are incorrect.  Here are the values in question,
with their corrected factorizations:
\begin{align*}
  k&=16,r=3: & 2^{20}/3^7\cdot 5^3\cdot 7\cdot 11\cdot 13^2\cdot 17\\
  k&=16,r=11: & 2^{24}\cdot 839 / 3^{12}\cdot 5^8\cdot 7^4\cdot
  11^2\cdot 13^2\cdot 17\cdot 19\cdot 23\\
  k&=20,r=11: & 2^{27}\cdot 304477 / 3^{19}\cdot 5^8\cdot 7^4\cdot
  11^2\cdot 13^2\cdot 17^2\cdot 19\cdot 23\cdot 29
\end{align*} 
}, see 
Table~1 in~\cite{Dummigan1}.  We were also
able to verify the case $r=24$ by comparing our result with the trace of
$\widetilde{\Lambda}(\Sym^2 f, 46)$ as obtained (by theoretical means)
by Lanphier in~\cite{Lanphier}.

The following statement summarizes our results on
Conjecture~\ref{conj:Sym}.
\begin{theorem}\label{thm:Sym}
  Let $r\leq 32$.  If $f\in S^{(1)}_r$ is a Hecke
  eigenform with coefficient field $\Q_f$ and $\ell>2$ is a 
  prime in $\Q_f$ that divides the algebraic critical value
  $\widetilde{\Lambda}(\Sym^2 f, 2r-4)$, then there exists a Hecke 
  eigenform $F\in S_{r-2,2}^{(2)}$ such that
  \begin{equation*}
    \mu_{p^\delta}(F)\equiv \mu_{p^\delta}(f)(p^{\delta(r-4)} + 1)
    \pmod{\ell}
  \end{equation*}
  for
  \begin{equation*}
    p^\delta\in\{2, 3, 4, 5, 7, 8, 9, 11, 13, 17, 19, 23, 25, 27, 29,
    31, 125\}.
  \end{equation*}
\end{theorem}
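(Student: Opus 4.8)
The plan is to verify Conjecture~\ref{conj:Sym} directly in each case, reusing the computational machinery developed for Theorem~\ref{thm:Harder} but with the symmetric square critical value $\widetilde{\Lambda}(\Sym^2 f, 2r-4)$ in place of the standard critical value. Specializing Conjecture~\ref{conj:Sym} to $t = 2r-4$ forces $k = r-2$ and $j = 2$, so the relevant Siegel eigenforms $F$ lie in the spaces $S^{(2)}_{r-2,2}$ whose eigenbases were produced in Section~\ref{sec:computations}. For the even weights $r \le 32$ with $S^{(1)}_r \ne 0$ I would proceed uniformly; the weight $r = 14$ is vacuous since $S^{(1)}_{14} = 0$, and any weight for which no prime $\ell > 2$ divides the corresponding algebraic critical value is likewise vacuous.

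First I would, for each Hecke eigenform $f \in S^{(1)}_r$, produce the congruence primes. As described above for the symmetric square case, this means evaluating the symmetric square $L$-function at $t = 2r-4$ and at $t = r$ to high precision (using Dokchitser's calculator together with the Sage code of Raum), recovering $\langle f, f\rangle$ from the identity $\langle f,f\rangle = \tfrac{(r-1)!}{2^{2r-1}\pi^{r+1}} L(\Sym^2 f, r)$, forming the algebraic critical value $\widetilde{\Lambda}(\Sym^2 f, 2r-4)$ through~\eqref{eqn:sym}, recognizing the resulting floating-point number as an algebraic number by LLL, and reading off the primes $\ell > 2$ from the numerator of the norm of its minimal polynomial. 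In parallel I would record the Hecke eigenvalues $a_p$ of $f$, hence its Satake parameters at $p$ and the quantities $\mu_{p^\delta}(f)$.

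Next I would assemble the left-hand sides $\mu_{p^\delta}(F)$. From Section~\ref{sec:computations} I already have the Siegel Hecke eigenvalues $\lambda_p(F)$ and $\lambda_{p^2}(F)$ for each eigenform $F \in S^{(2)}_{r-2,2}$; together with the identity $\lambda_2(p^2) = p^{2k+j-6}$ the formulas of the worked Example determine $\mu_p(F)$, $\mu_{p^2}(F)$ and $\mu_{p^3}(F)$. For each prime power $p^\delta$ on the list with $\delta \in \{1,2\}$ I would then test the congruence
\begin{equation*}
  \mu_{p^\delta}(F) \equiv \mu_{p^\delta}(f)\bigl(p^{\delta(r-4)} + 1\bigr) \pmod{\ell}
\end{equation*}
exactly as in the verification step for Theorem~\ref{thm:Harder}: both sides are algebraic numbers in $\Q_F$ and $\Q_f$, so I would compute the minimal polynomial of $\lambda_p(F)$ and that of $a_p$, reduce them modulo $\ell$, and check that some common choice of roots in $\F_\ell$ realizes the congruence. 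The three remaining cube cases $p^\delta \in \{8, 27, 125\}$ then follow from the $\delta \le 2$ data by Proposition~\ref{prop:cubes}, precisely as in the proof of Theorem~\ref{thm:Harder}.

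The step I expect to be the main obstacle is the reliable certification of the congruence primes. The symmetric square value at the near-edge point $t = 2r-4$ must be computed to enough digits that LLL returns the genuine minimal polynomial rather than a lattice artefact; an error here would corrupt the factorization of the numerator and hence the very set of primes $\ell$ that the theorem is about. A secondary subtlety is that the congruence compares reductions of the two a priori distinct fields $\Q_f$ and $\Q_F$ at primes above $\ell$, so I must confirm that a single consistent pairing of roots in $\F_\ell$ makes the congruence hold simultaneously for all listed $p^\delta$, not merely one at a time. The remark preceding the statement, that $\ell > 2$ already suffices here rather than Dummigan's $\ell > 2r$, indicates that these congruences are numerically robust and that no further largeness hypothesis on $\ell$ is needed in the verified range.
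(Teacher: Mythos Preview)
Your proposal is correct and follows essentially the same approach as the paper: compute the algebraic critical value $\widetilde{\Lambda}(\Sym^2 f, 2r-4)$ via Dokchitser's calculator and Raum's code, extract the congruence primes $\ell>2$ by LLL, verify the congruence for $\delta\in\{1,2\}$ against the eigenvalue data from Section~\ref{sec:computations}, and invoke Proposition~\ref{prop:cubes} for the cube cases. The only small point you leave implicit that the paper makes explicit is the base case $r=12$, where one actually computes $\widetilde{\Lambda}(\Sym^2\Delta,20)$ and finds its numerator to be $-2^{23}$, confirming that no odd $\ell$ occurs; this fits under your general ``vacuous if no $\ell>2$ divides'' clause, but it is worth stating rather than assuming.
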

\begin{proof}
  For weight $r=12$, we computed the numerator of the rational number
  $\widetilde{\Lambda}(\Sym^2 \Delta, 20)$ and found it to be
  $-2^{23}$, so there are no large primes dividing this algebraic
  critical value and the statement is vacuously true.

  For weights $16\leq r\leq 32$ and
  \begin{equation*}
    p^\delta\in\{2, 3, 4, 5, 7, 9, 11, 13, 17, 19, 23, 25, 29, 31\}
  \end{equation*}
  we have verified the congruence for all large primes $\ell$ dividing
  the algebraic critical value.  The results are listed in
  Table~\ref{tbl:Sym}.

  The remaining cases $p^\delta\in\{8, 27, 125\}$ follow from the rest
  by Proposition~\ref{prop:cubes}.
\end{proof}

\begin{table}[h]
\begin{tabular}{|r|r|r|r|r|r|r|r|} \hline
  $r$ & $t$ & 
  odd $\ell\mid\textrm{Norm}(\widetilde\Lambda(\Sym^2 f,t))$ & 
  $(k, j)$ & $\dim S_{k,j}^{(2)}$\\\hline
$16$ & $28$ & $373$ & $(14,2)$ & $1$ \\ \hline
$18$ & $32$ & $541$ & $(16,2)$ & $2$ \\
     &      & $2879$ & & \\ \hline
$20$ & $36$ & $439367$ & $(18,2)$ & $2$ \\ \hline
$22$ & $40$ & $281$ & $(20,2)$ & $3$ \\
     &      & $286397$ & & \\ \hline
$24$ & $44$ & $2795437$ & $(22,2)$ & $5$ \\
     &      & $256021114049$ & &\\ \hline
$26$ & $48$ & $4598642018203$ & $(24,2)$ & $5$ \\ \hline
$28$ & $52$ & $4017569791$ & $(26,2)$ & $8$ \\
     &      & $65593901428085768723$ & & \\ \hline
$30$ & $56$ & $937481$ & $(28,2)$ & $10$ \\
     &      & $4302719815755987715030485446839$ & & \\ \hline
$32$ & $60$ & $350747$ & $(30,2)$ & $11$ \\ 
     &      & $45130901953$ & & \\
     &      & $432796809552670722149$ & & \\ \hline
\end{tabular}
\caption{A summary of the cases
  verified numerically for the proof of Theorem~\ref{thm:Sym}.}
\label{tbl:Sym}
\end{table}


\subsection{Reduction of cubes to primes and squares of primes}
\label{sect:cubes}
We describe some elementary considerations that allow reducing the case
$\delta=3$ of both conjectures to the cases $\delta=1$ and $\delta=2$.
For ease of notation in this section, we will write
\begin{align*}
  g_\delta &= \alpha_0^\delta + \alpha_0^\delta \alpha_1^\delta\\
  G_\delta &= \alpha_0^\delta + \alpha_0^\delta \alpha_1^\delta +
  \alpha_0^\delta \alpha_2^\delta + 
  \alpha_0^\delta \alpha_1^\delta \alpha_2^\delta,
\end{align*}
where in the first line $\alpha_0$ and $\alpha_1$ are the Satake
parameters of $f\in S_r^{(1)}$, while in the second line $\alpha_0$,
$\alpha_1$ and $\alpha_2$ are the Satake parameters of 
$F\in S_{k,j}^{(2)}$.

In the degree one setting, we have the relation
$\alpha_0^2\alpha_1=p^{r-1}$, which allows us to express $g_2$ and $g_3$
in terms of $g_1$:
\begin{equation}\label{eqn:g2g3}
  g_2 = g_1^2-2p^{r-1},\qquad g_3 = g_1(g_1^2-3p^{r-1}).
\end{equation}

In the degree two setting, we have the relation
$\alpha_0^2\alpha_1\alpha_2=p^{2k+j-3}$, which allows us to express
$G_3$ in terms of $G_1$ and $G_2$:
\begin{equation}\label{eqn:G3}
  G_3 = \frac{1}{2}G_1\left(-G_1^2+3G_2+6p^{2k+j-3}\right).
\end{equation}

\begin{proposition}\label{prop:cubes}
  In Conjecture~\ref{conj:Harder} and Conjecture~\ref{conj:Sym}, the
  congruences for the case $\delta=3$ follow from the congruences for
  the cases $\delta=1$ and $\delta=2$.
\end{proposition}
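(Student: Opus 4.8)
The plan is to prove both reductions by a single strategy: exploit the fact that \eqref{eqn:G3} expresses $G_3$ as a polynomial in $G_1$ and $G_2$, substitute the assumed $\delta=1$ and $\delta=2$ congruences into that polynomial, and then check that the result collapses—via \eqref{eqn:g2g3} and a cube factorization—to exactly the conjectured $\delta=3$ right-hand side. Since congruence mod $\ell^s$ is a ring congruence and the polynomial in \eqref{eqn:G3} has $\ell$-integral coefficients (the $\tfrac12$ is harmless because $\ell$ is odd), substituting congruent quantities for $G_1,G_2$ preserves the congruence, so the whole content is reduced to an \emph{exact} algebraic identity. First I would record the exponent bookkeeping that makes the two sides mesh. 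Writing $w:=p^{r-1}$ for the number appearing in the degree-one relation $\alpha_0^2\alpha_1=p^{r-1}$, one checks that $2k+j-3$ simplifies cleanly under each weight dictionary: for Conjecture~\ref{conj:Harder} (where $k=r-t+2$, $j=2t-r-2$) one gets $2k+j-3=r-1$, so the weight $p^{2k+j-3}$ in \eqref{eqn:G3} is exactly $w$; for Conjecture~\ref{conj:Sym} (where $k=t-r+2$, $j=2r-t-2$, so $k+j=r$) one gets $2k+j-3=t-1$, so with $c:=p^{k-2}=p^{t-r}$ the weight in \eqref{eqn:G3} is $cw$.

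For Conjecture~\ref{conj:Harder} I would set $a:=p^{k+j-1}$ and $b:=p^{k-2}$, and note the key relation $ab=p^{2k+j-3}=w$. Substituting $G_1\equiv g_1+a+b$ and $G_2\equiv g_2+a^2+b^2$ into \eqref{eqn:G3} and using $g_2=g_1^2-2w$ from \eqref{eqn:g2g3}, the bracket $-G_1^2+3G_2+6w$ collapses to $2(g_1^2+a^2+b^2-g_1a-g_1b-ab)$. Applying the identity $x^3+y^3+z^3-3xyz=(x+y+z)(x^2+y^2+z^2-xy-yz-zx)$ then gives $G_3\equiv g_1^3+a^3+b^3-3g_1ab$, and since $ab=w$ and $g_3=g_1(g_1^2-3w)$ this is precisely $g_3+a^3+b^3$, the conjectured $\delta=3$ right-hand side.

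For Conjecture~\ref{conj:Sym} the computation is parallel with $c=p^{k-2}$ and weight $w_F=cw$. Substituting $G_1\equiv g_1(c+1)$ and $G_2\equiv g_2(c^2+1)$ into \eqref{eqn:G3} and again using $g_2=g_1^2-2w$, the bracket $-G_1^2+3G_2+6cw$ collapses to $2(c^2-c+1)(g_1^2-3w)$; the sum-of-cubes factorization $c^3+1=(c+1)(c^2-c+1)$ then yields $G_3\equiv g_1(c+1)(c^2-c+1)(g_1^2-3w)=g_3(c^3+1)$, exactly the desired right-hand side. The hard part will not be the final cube identities, which do the real work almost automatically, but the exponent bookkeeping of the first paragraph: one must verify that $2k+j-3$ genuinely equals $r-1$ (resp. $t-1$) under each weight dictionary, because it is precisely this coincidence that forces the $p$-power in \eqref{eqn:G3} to be $w$ (resp. $cw$) and so lets it combine with the $p^{r-1}$ of \eqref{eqn:g2g3}. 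Once that is in place, the remaining steps are routine polynomial algebra, valid mod $\ell^s$ since $\ell$ is odd.
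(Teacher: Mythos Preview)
Your proof is correct and follows essentially the same route as the paper: substitute the $\delta=1,2$ congruences into the exact identity \eqref{eqn:G3}, simplify via \eqref{eqn:g2g3}, and invoke the weight relations $2k+j-3=r-1$ (Harder) and $k+j=r$ (Sym) to make the $p$-powers match. The only cosmetic difference is that the paper packages the right-hand side as $h_\delta$ and records the parallel recurrences $h_2=h_1^2-2p^{2k+j-3}$, $h_3=h_1(h_1^2-3p^{2k+j-3})$, whereas you split $h_1=a+b$ into its two summands and invoke the factorizations $x^3+y^3+z^3-3xyz=(x+y+z)(x^2+y^2+z^2-xy-yz-zx)$ and $c^3+1=(c+1)(c^2-c+1)$; the resulting computations are line-for-line equivalent, and your explicit remark that $\ell$ odd makes the $\tfrac12$ in \eqref{eqn:G3} $\ell$-integral is a point the paper leaves implicit.
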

\begin{proof}
  \
  \begin{enumerate}
    \item Define $h_\delta=p^{\delta(k+j-1)}+p^{\delta(k-2)}$ for all
      $\delta\geq 1$.  Then the congruence in
      Conjecture~\ref{conj:Harder} can be written
      \begin{equation*}
        (C_\delta):\qquad G_\delta\equiv g_\delta + h_\delta\pmod{\ell^s}.
      \end{equation*}
      It is easily seen that 
      \begin{equation}\label{eqn:h2h3}
        h_2=h_1^2-2p^{2k+j-3},\qquad h_3=h_1(h_1^2-3p^{2k+j-3}).
      \end{equation}
      (Observe the similarities between these equations
      and~\eqref{eqn:g2g3}.)

      We assume that the congruences $(C_1)$ and $(C_2)$ hold.  Using 
      Equations~\eqref{eqn:G3}, ~\eqref{eqn:g2g3} and~\eqref{eqn:h2h3}
      (in this order), we compute
      \begin{align*}
        G_3 &\equiv
        \frac{1}{2}(g_1+h_1)\left(-(g_1+h_1)^2+3(g_2+h_2)+6p^{2k+j-3}\right)\\
        &=\frac{1}{2}(g_1+h_1)\left((-g_1^2+3g_2)-2g_1h_1+(-h_1^2+3h_2+6p^{2k+j-3})\right)\\
        &=(g_1+h_1)\left(g_1^2-3p^{r-1}-g_1h_1+h_1^2\right)\\
        &=g_1^3-3p^{r-1}g_1+h_1^3-3p^{r-1}h_1\\
        &=g_3+h_3,
      \end{align*}
      after noting that, under the conditions of
      Conjecture~\ref{conj:Harder}, the weight parameters $r$, $k$ and
      $j$ are related by $r-1=2k+j-3$.
    \item The calculation is similar to the previous part.
      We let $h_\delta=p^{\delta(k-2)}+1$ for all 
      $\delta\geq 1$.  The congruence in Conjecture~\ref{conj:Sym} takes 
      the form
      \begin{equation*}
        (C_\delta^\prime):\qquad 
        G_\delta\equiv g_\delta h_\delta\pmod{\ell^2}.
      \end{equation*}
      We easily see that
      \begin{equation*}
        h_2=h_1^2-2p^{k-2},\qquad h_3=h_1(h_1^2-3p^{k-2}).
      \end{equation*}

      Assuming that congruences $(C_1^\prime)$ and $(C_2^\prime)$ hold,
      we obtain
      \begin{align*}
        G_3 &\equiv\frac{1}{2}g_1h_1
        \left(-g_1^2h_1^2+3(g_1^2-2p^{r-1})(h_1^2-2p^{k-2})+6p^{2k+j-3}\right)\\
        &=g_1h_1\left(g_1^2h_1^2-3p^{k-2}g_1^2-3p^{r-1}h_1^2+9p^{2k+j-3}\right)\\
        &=g_1h_1\left(g_1^2-3p^{r-1}\right)\left(h_1^2-3p^{k-2}\right)\\
        &=g_3h_3,
      \end{align*}
      where we used the relation $r=k+j$, valid under the conditions of
      Conjecture~\ref{conj:Sym}.
  \end{enumerate}
\end{proof}

\printbibliography

\end{document}